\numberwithin{equation}{section}
\newtheorem{thm}{Theorem}[section]
\newtheorem{lem}[thm]{Lemma}
\newtheorem{prop}[thm]{Proposition}
\newtheorem{cor}[thm]{Corollary}
\DeclareMathOperator{\Aut}{Aut}
\begin{document}

\title{Uniqueness of the direct decomposition of toric manifolds}
\author{Miho Hatanaka}
\address{Department of Mathematics, Osaka City University, Sumiyoshi-ku, Osaka 558-8585, Japan.}
\email{hatanaka.m.123@gmail.com}
\date{\today}
\maketitle

\begin{abstract}
In this paper, we study the uniqueness of the direct decomposition of a toric manifold.
We first observe that the direct decomposition of a toric manifold as \emph{algebraic varieties} is unique up to order of the factors.  An algebraically indecomposable toric manifold happens to decompose as smooth manifold and no criterion is known 
for two toric manifolds to be diffeomorphic, so the unique decomposition problem for toric manifolds as \emph{smooth manifolds} is highly nontrivial and nothing seems known for the problem so far.  
We prove that this problem is affirmative if the complex dimension of each factor in the decomposition is less than or equal to two. 
A similar argument shows that the direct decomposition of a smooth manifold into copies of $\mathbb{C}P^1$ and simply connected closed smooth 4-manifolds with smooth actions of $(S^1)^2$ is unique up to order of the factors.
\end{abstract}

\section{Introduction}

A \emph{toric variety} is a normal algebraic variety of complex dimension $n$ with a complex torus action having an open dense orbit. 
The family of toric varieties one-to-one corresponds to that of fans which are objects in combinatorics. 
Via this correspondence, we can describe geometrical properties of toric varieties in terms of the corresponding fans. 
A toric variety may not be compact and nonsingular, however, this paper deals with compact nonsingular toric varieties, called \emph{toric manifolds}. 

We say that a toric manifold is \emph{algebraically indecomposable} if it does not decompose into the product of two toric manifolds of positive dimension \emph{as varieties}.  Using the bijective correspondence between toric varieties and fans, one can see that the direct decomposition of a toric manifold into algebraically indecomposable toric manifolds as algebraic varieties is unique up to order of the factors (Theorem~\ref{theo:2.1}). 

If two toric manifolds are isomorphic as varieties, then they are diffeomorphic, but the converse is not true in general and no criterion is known for two toric manifolds to be diffeomorphic.  One intriguing problem in this direction is the following problem posed in \cite{ma-su08}.

\medskip
\noindent
{\bf Cohomological rigidity problem for toric manifolds} (\cite{ma-su08}).  Are two toric manifolds diffeomorphic (or homeomorphic) if their cohomology rings with integer coefficients are isomorphic as graded rings?

\medskip
\noindent
No counterexample and some partial affirmative solutions are known to the problem above, see \cite{ch-ma-su11} for the recent development.   

An algebraically indecomposable toric manifold happens to decompose into the product of two toric manifolds of positive dimension as \emph{smooth manifolds}.  Hirzebruch surfaces except $\mathbb{C}P^1\times\mathbb{C}P^1$ with vanishing second Stiefel-Whitney classes are such examples.  We say that a toric manifold is \emph{differentially indecomposable} if it does not decompose into the product of two toric manifolds of positive dimension \emph{as smooth manifolds}.  %The following problem was posed in \cite{masu12}. 

\medskip
\noindent
{\bf Unique decomposition problem for toric manifolds} (\cite{masu12}).  Is the direct decomposition of a toric manifold into the product of differentially indecomposable toric manifolds unique up to order of the factors?

\medskip 

%The unique decomposition problem was asked for arbitrary closed smooth manifolds around 1960 ?? and was negatively answered (see \cite{kw-sc} for the history and related problems).  For example, $\Sigma^7 \times S^3$ is diffeomorphic to $S^7 \times S^3$ for any exotic seven-sphere $\Sigma^7$ (\cite{ri}).
%To the contrary, 
It has recently been shown in \cite{ch-ma-ou10} that the unique decomposition property holds for real Bott manifolds which are a special class of \emph{real} toric manifolds.  Real Bott manifolds are compact flat manifolds and it is shown in \cite{char65} that there are non-diffeomorphic compact flat manifolds whose products with $S^1$ are diffeomorphic.  This means that the unique decomposition property does not hold for general compact flat manifolds while it does for the special class of compact flat manifolds consisting of real Bott manifolds.  

As far as the author knows, nothing is known for the unique decomposition problem for toric manifolds.  
In this paper, we show that it is affirmative if the complex dimension of every factor in the product is less than or equal to two (Theorem~\ref{theo:3.1}).  We also prove that the cohomological rigidity problem is affirmative for those products.  
Note that a toric manifold of complex dimension one is diffeomorphic to $\mathbb{C} P^1$ and that of complex dimension two is diffeomorphic to $\mathbb{C} P^1\times \mathbb{C} P^1$ or $\mathbb{C}P^2\sharp q \overline{\mathbb{C}P^2}$ $(q\ge 0)$.  

Simply connected closed smooth 4-manifolds with smooth actions of $(S^1)^2$ are of the form  
\begin{equation} \label{eq:1.1}
S^4 \sharp p\mathbb{C}P^2 \sharp q\overline{\mathbb{C}P^2} \sharp r(\mathbb{C}P^1 \times \mathbb{C}P^1)\quad (p+q+r\ge 0)
\end{equation}
(see \cite{or-ra70}).  These manifolds are not diffeomorphic to the product of two manifolds of positive dimension unless $p=q=0$ and $r=1$.  Our method used to prove Theorem~\ref{theo:3.1} can be applied to products of copies of $\mathbb{C}P^1$ and manifolds in \eqref{eq:1.1} and yields a more general result (Theorem~\ref{theo:4.1}) than Theorem~\ref{theo:3.1}.     

This paper is organized as follows.  In Section 2, we prove the uniqueness of the direct decomposition of a toric manifold into algebraically indecomposable toric manifolds as algebraic varieties. 
The key fact used to prove it is that two toric manifolds are isomorphic as algebraic varieties if and only if the corresponding two fans are isomorphic.  
Unlike this, a useful criterion for two toric manifolds to be diffeomorphic is not known. 
In Section 3, we prove that the direct decomposition of a toric manifold into differentially indecomposable toric manifolds is unique up to order of the factors if the complex dimension of each factor is less than or equal to two.  In Section 4, we apply the idea developed in Section 3 to products of copies of $\mathbb{C}P^1$ and manifolds in \eqref{eq:1.1}.   

\section{Direct decomposition of toric manifolds as algebraic varieties} 

We briefly review toric geometry and refer the reader to \cite{fult93} and \cite{oda88} for details.   
A \textit{toric variety} is a normal algebraic variety of complex dimension $n$ with an algebraic action of a complex torus $(\mathbb{C}^*)^n$ having an open dense orbit. The fundamental theorem in toric geometry says that the category of toric varieties of (complex) dimension $n$ is isomorphic to the category of fans of (real) dimension $n$. Here, a \textit{fan} $\Delta$ of dimension $n$ is a collection of rational strongly convex polyhedral cones in $\mathbb{R}^n$ satisfying the following conditions: \ \\
EEach face of a cone in $\Delta$ is also a cone in $\Delta$.\ \\
EThe intersection of two cones in $\Delta$ is a face of each.\ \\
A rational strongly convex polyhedral cone in $\mathbb{R}^n$ is a cone with apex at the origin, generated by a finite number of vectors;
grationalh means that it is generated by vectors in the lattice $\mathbb{Z}^n$, and gstrongh convexity that it contains no line through the origin.
The union of cones in the fan $\Delta$ coincides with $\mathbb{R}^n$ if and only if the corresponding toric variety is compact, and the generators of each cone in $\Delta$ are a part of a basis of $\mathbb{Z}^n$ if and only if the corresponding toric variety is nonsingular. In this paper, we will treat only compact nonsingular toric varieties and call them \textit{toric manifolds}.

The fundamental theorem in toric geometry implies that two toric manifolds $M$ and $N$ of complex dimension $n$ are weakly equivariantly isomorphic as algebraic varieties if and only if the corresponding fans are isomorphic, i.e., there is an automorphism of $\mathbb{Z}^n$ sending cones to cones in the corresponding fans. Here a map $f\colon M\to N$ is said to be weakly equivariant if there is an automorphism $\rho$ of $(\mathbb{C}^*)^n$ such that $f(gx)=\rho(g)f(x)$ for any $g\in (\mathbb{C}^*)^n$ and $x\in M$.  

\begin{prop} \label{prop:2.1}
Two toric manifolds are isomorphic as algebraic varieties if and only if they are weakly equivariantly isomorphic as algebraic varieties.  Therefore, two toric manifolds are isomorphic as algebraic varieties if and only if their corresponding fans are isomorphic. 
\end{prop}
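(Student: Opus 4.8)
The plan is to establish the nontrivial implication, that an isomorphism of algebraic varieties can always be upgraded to a weakly equivariant one, since the reverse implication is immediate: a weakly equivariant isomorphism is in particular an isomorphism of algebraic varieties. The second assertion of the proposition then follows at once by combining the first with the fundamental theorem recalled above, which identifies weak equivariant isomorphism of toric manifolds with isomorphism of the corresponding fans.

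The key input I would use is the structure of the automorphism group of a complete toric manifold. By a theorem of Demazure, the full automorphism group $\Aut(M)$ of a toric manifold $M$ is a linear algebraic group, and the acting torus $T_M\cong(\mathbb{C}^*)^n$ is a maximal torus of $\Aut(M)$ (equivalently, of its identity component $\Aut^0(M)$). I would take this as the central fact driving the argument.

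Given an isomorphism of algebraic varieties $f\colon M\to N$, I would first form the induced isomorphism of algebraic groups
\[
\phi\colon \Aut(M)\to\Aut(N),\qquad \phi(g)=f\circ g\circ f^{-1}.
\]
Since $\phi$ carries the identity component to the identity component and sends maximal tori to maximal tori, the image $\phi(T_M)$ is a maximal torus of $\Aut^0(N)$. The acting torus $T_N$ is another maximal torus of the same connected linear algebraic group $\Aut^0(N)$, so by conjugacy of maximal tori there is an element $h\in\Aut^0(N)$ with $h\,\phi(T_M)\,h^{-1}=T_N$. I would then replace $f$ by $f'=h\circ f$, still an isomorphism of varieties, and check that for every $g\in T_M$ one has $f'\circ g\circ (f')^{-1}=h\,\phi(g)\,h^{-1}\in T_N$. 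Setting $\rho(g):=f'\circ g\circ (f')^{-1}$ yields an isomorphism of tori $T_M\to T_N$ which, under the identifications $T_M\cong(\mathbb{C}^*)^n\cong T_N$, is an automorphism $\rho$ of $(\mathbb{C}^*)^n$ satisfying $f'(gx)=\rho(g)f'(x)$ for all $g$ and $x$. Thus $f'$ is a weakly equivariant isomorphism, as required.

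The main obstacle is the structural input rather than the formal manipulation: one must know that $\Aut(M)$ is a linear algebraic group, so that the notion of maximal torus and the conjugacy theorem apply, and one must verify that the geometric torus $T_M$ is genuinely \emph{maximal} there (so that $\phi(T_M)$ and $T_N$ are actually conjugate, and not merely both contained in some larger torus of the automorphism group). Once this is in hand, the remaining steps---functoriality of $\phi$, preservation of identity components and of maximal tori, and the bookkeeping that converts conjugation by $h$ into the required automorphism $\rho$---are routine.
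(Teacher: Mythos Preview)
Your proposal is correct and follows essentially the same approach as the paper: both arguments use that $\Aut(M)$ is an algebraic group in which the acting torus is maximal, transport one torus to the other side via the isomorphism $f$, and then conjugate by an element of the automorphism group to align the two maximal tori, thereby producing a weakly equivariant isomorphism. The only cosmetic differences are the direction of the induced map (you send $\Aut(M)\to\Aut(N)$ while the paper sends $\Aut(N)\to\Aut(M)$) and your explicit attribution of the structural input to Demazure together with a more detailed verification of the resulting automorphism $\rho$.
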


%\noindent{\itshape Proof}.
\begin{proof} 
This proposition is well-known but since there seems no literature, we shall sketch the proof.  

It suffices to prove the \lq\lq only if" part in the former statement because the \lq\lq if" part is trivial and the latter statement follows from the former statement and the fundamental theorem in toric geometry as remarked above. 
 Let $\Aut(M)$ be the group of automorphisms of a toric manifold $M$. This is a (finite dimensional) algebraic group, and the torus $T_M=(\mathbb{C}^*)^n$ acting on $M$ is a subgroup of $\Aut(M)$, in fact, it is a maximal torus in $\Aut(M)$.  Now, let $f$ be an isomorphism (as algebraic varieties) from $M$ to another toric manifold $N$.  Then $f$ induces a group isomorphism ${\hat f}\colon \Aut(N)\to \Aut(M)$ mapping $g\in \Aut(N)$ to $f^{-1}\circ g\circ f \in \Aut(M)$.  Since ${\hat f}(T_N)$ is a maximal torus in $\Aut(M)$ and all maximal tori in an algebraic group  are conjugate to each other, there exists $h\in\Aut(M)$ satisfying  ${\hat f}(T_N)=hT_Mh^{-1}$.
 Then $f\circ h$ is a weakly equivariant isomorphism from $M$ to $N$.
%\qed
\end{proof}
%\medskip

We say that a toric manifold is \textit{algebraically indecomposable} if it does not decompose into the product of two toric manifolds of positive dimension as algebraic varieties.  Again, the fundamental theorem in toric geometry implies that a toric manifold is algebraically indecomposable if and only if the corresponding fan is \textit{indecomposable}, i.e., it does not decompose into the product of two fans of positive dimension. 

%The following is the main theorem in this section.

\begin{thm} \label{theo:2.1}
 The direct decomposition of a toric manifold into algebraically indecomposable toric manifolds as algebraic varieties is unique up to order of the factors.  Namely, if $M_i$ $(1\le i\le k)$ and $M_j^\prime$ $(1\le j\le \ell)$ are algebraically indecomposable toric manifolds and $\prod_{i=1}^kM_i$ and $\prod_{j=1}^\ell M_j^\prime$ are isomorphic as algebraic varieties, then $k=\ell$ and there exists an element $\sigma$ in the symmetric group $S_k$ on $k$ letters such that $M_i$ is isomorphic to $M_{\sigma(i)}^\prime$ as algebraic varieties for all $1\le i\le k$.
 \end{thm}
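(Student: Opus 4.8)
The plan is to translate the statement entirely into the language of fans and then prove a unique factorization theorem for fans under the product operation. By Proposition~\ref{prop:2.1}, an isomorphism $\prod_{i=1}^k M_i\cong\prod_{j=1}^\ell M_j^\prime$ of algebraic varieties corresponds to an isomorphism of the associated fans, i.e. an automorphism $\phi$ of $N=\mathbb{Z}^n$ carrying the product fan $\Delta=\Delta_1\times\cdots\times\Delta_k$ onto $\Delta^\prime=\Delta_1^\prime\times\cdots\times\Delta_\ell^\prime$. Applying $\phi$ to the first decomposition, I reduce to the situation of a single fan $\Delta^\prime$ in $N$ equipped with two decompositions into indecomposable fans, say $N=\bigoplus_i L_i=\bigoplus_j L_j^\prime$ with $\Delta^\prime=\prod_i\phi(\Delta_i)=\prod_j\Delta_j^\prime$; the factors are indecomposable because the $M_i,M_j^\prime$ are, and, as recalled after Proposition~\ref{prop:2.1}, algebraic indecomposability is equivalent to indecomposability of the corresponding fan. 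It then suffices to show that such a decomposition of $\Delta^\prime$ into indecomposable factors is unique up to order, since then the two factorizations match and the matched factors are isomorphic fans, hence isomorphic toric manifolds.

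To produce a canonical decomposition, I would look at the set $R$ of primitive generators of the rays of $\Delta^\prime$, which is intrinsic to the fan. In any product decomposition $N=\bigoplus_i L_i$, $\Delta^\prime=\prod_i\Delta_i$, every ray of $\Delta^\prime$ lies in a single summand $V_i:=L_i\otimes\mathbb{R}$, so $R$ is partitioned as $R=\bigsqcup_i(R\cap V_i)$ and, by completeness of each $\Delta_i$, the subspace $V_i$ is spanned by $R\cap V_i$. Now consider the linear matroid on $R$ over $\mathbb{Q}$ and its partition $R=\bigsqcup_a R^{(a)}$ into connected components (two generators lying in the same component exactly when joined by a chain of circuits). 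Since $V=\bigoplus_i V_i$ is a direct sum, every circuit of $R$ is contained in a single $V_i$; hence each component $R^{(a)}$ lies in one summand, so the partition coming from any product decomposition is a coarsening of the matroid partition. This matroid partition is manifestly intrinsic and preserved by fan isomorphisms.

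The heart of the argument is the converse, which I expect to be the main obstacle: if $R=R_1\sqcup R_2$ is a matroid separation, so that $V=\operatorname{span}(R_1)\oplus\operatorname{span}(R_2)$ and every ray lies in one of the two spans, then $\Delta^\prime$ actually splits as the product of the subfans supported on $R_1$ and $R_2$. The lattice-theoretic part is easy from nonsingularity and completeness: choosing a maximal cone $\sigma$, its generators form a $\mathbb{Z}$-basis of $N$ that splits into $\mathbb{Z}$-bases of $L_a:=N\cap\operatorname{span}(R_a)$, giving $N=L_1\oplus L_2$ with each factor cone unimodular. The combinatorial part is to show that every maximal cone of $\Delta^\prime$ is a product of a cone in $V_1$ and a cone in $V_2$ and that, conversely, all such products occur; this is where completeness of $\Delta^\prime$ is essential, and I would argue it by projecting $\Delta^\prime$ onto each $V_i$, verifying that the images are complete fans, and using the facet-adjacency connectivity of the maximal cones of a complete fan to propagate the product structure across walls.

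Granting this lemma (and iterating it), the matroid partition of $R$ yields an honest decomposition $\Delta^\prime=\prod_a\Delta^{\prime(a)}$ in which each factor is matroid-connected and therefore, by the coarsening observation above, indecomposable. Conversely, any decomposition of $\Delta^\prime$ into indecomposable factors induces a partition of $R$ whose blocks are unions of matroid components; but an indecomposable factor whose generator set were a union of two or more matroid components would split further by the lemma, a contradiction, so each block is a single component. Hence every indecomposable decomposition induces exactly the matroid partition, which is canonical. Transporting back through $\phi$ yields $k=\ell$ together with a permutation $\sigma\in S_k$ identifying $\phi(\Delta_i)$ with $\Delta_{\sigma(i)}^\prime$, whence $M_i$ is isomorphic to $M_{\sigma(i)}^\prime$ as algebraic varieties, as required.
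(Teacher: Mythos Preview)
Your approach is correct and shares the decisive observation with the paper's proof: the rays of a product fan are precisely the rays of its factors, so the linear isomorphism $\psi$ carries each ray of $\Delta_i$ into one of the factor subspaces $V_j'$. From this point, however, the paper proceeds much more directly than you do. Instead of introducing the linear matroid on $R$ and constructing a canonical finest decomposition, the paper simply observes that, because every ray of $\psi(\Delta_i)$ lies in some $V_j'$, the complete fan $\psi(\Delta_i)$ equals the product $\prod_j p_j(\psi(\Delta_i))$ of its projections; indecomposability of $\Delta_i$ then forces all but one projection to be the origin, so $\psi(\Delta_i)\subseteq\Delta_j'$ for a single $j$, and applying the same reasoning to $\psi^{-1}$ gives $\psi(\Delta_i)=\Delta_j'$.

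Note that the one line ``$\psi(\Delta_i)=\prod_j p_j(\psi(\Delta_i))$'' in the paper's argument is exactly the combinatorial lemma you single out as ``the main obstacle'' (a complete nonsingular fan whose ray set is separated by a direct-sum splitting of the ambient space is a product); the paper asserts it without further comment, while you correctly identify that it needs the wall-crossing/connectivity argument you sketch. What your matroid packaging buys is a canonical object---the finest product decomposition, intrinsic to the fan---at the cost of extra machinery. The paper's version buys brevity: it bypasses the canonical decomposition entirely and matches factors directly via indecomposability and the symmetry between $\psi$ and $\psi^{-1}$.
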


%\noindent{\itshape Proof}. 
\begin{proof}
Denote the fan of $M_i$ by $\Delta_i$ and that of $M_j^\prime$ by $\Delta_j^\prime$, and let $\psi$ be an isomorphism from $\prod_{i=1}^k \Delta_i$ to $\prod_{j=1}^\ell \Delta_j^\prime$.  Let $p_j$ be the projection from $\prod_{j=1}^\ell \Delta_j^\prime$ onto $\Delta_j^\prime$.  Since an edge in $\Delta_i$ maps to an edge in $\prod_{j=1}^\ell \Delta_j^\prime$ by $\psi$, the image $\psi(\Delta_i)$ coincides with the product $\prod_{j=1}^\ell p_{j}(\psi(\Delta_i))$.  This together with the indecomposability of $\Delta_i$ implies that $p_j(\psi(\Delta_i))$ consists of only the origin except for one $j$, namely $\psi(\Delta_i)$ is contained in some $\Delta_j^\prime$.  Applying the same argument to $\psi^{-1}$, one concludes that $\psi(\Delta_i)=\Delta_j^\prime$. This together with Proposition~\ref{prop:2.1} proves the theorem.  
%\qed
\end{proof}
%\medskip

The following corollary follows from Theorem~\ref{theo:2.1}.

\begin{cor}[cancellation]. 
 Let $M,M^\prime$ and $M^{\prime \prime}$ be toric manifolds.
 If the direct products $M \times M^{\prime \prime}$ and $M^{\prime} \times M^{\prime \prime}$ are isomorphic as varieties, then so are $M$ and $M^{\prime}$. 
\end{cor}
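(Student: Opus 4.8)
The plan is to reduce the cancellation statement to the uniqueness of the indecomposable decomposition established in Theorem~\ref{theo:2.1}, by passing to multisets of isomorphism classes. First I would decompose each of the three manifolds into algebraically indecomposable factors: write $M\cong\prod_{i=1}^a A_i$, $M'\cong\prod_{j=1}^b B_j$, and $M''\cong\prod_{k=1}^c C_k$ as varieties, where the $A_i$, $B_j$, $C_k$ are all algebraically indecomposable toric manifolds. Such decompositions exist because one can keep splitting off a factor as long as some factor is decomposable, and the process terminates since the complex dimension strictly drops at each splitting.

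Next, observe that $M\times M''\cong\prod_{i=1}^a A_i\times\prod_{k=1}^c C_k$ and $M'\times M''\cong\prod_{j=1}^b B_j\times\prod_{k=1}^c C_k$ are two decompositions into algebraically indecomposable factors. Since $M\times M''$ and $M'\times M''$ are isomorphic as varieties by hypothesis, Theorem~\ref{theo:2.1} applies and tells us that these two lists of indecomposable factors agree up to order and isomorphism. In particular $a+c=b+c$, so $a=b$, and as multisets of isomorphism classes we have the equality
\[
\{A_1,\dots,A_a\}\sqcup\{C_1,\dots,C_c\}=\{B_1,\dots,B_b\}\sqcup\{C_1,\dots,C_c\}.
\]

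Finally I would cancel the common part. For each isomorphism class $X$ of an algebraically indecomposable toric manifold, comparing the multiplicity of $X$ on the two sides of the displayed equality and subtracting the (finite) multiplicity of $X$ among the $C_k$ shows that $X$ occurs the same number of times among the $A_i$ as among the $B_j$. Hence $\{A_1,\dots,A_a\}=\{B_1,\dots,B_b\}$ as multisets of isomorphism classes, which yields $M\cong\prod_{i=1}^a A_i\cong\prod_{j=1}^b B_j\cong M'$ as varieties, as required.

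There is no serious obstacle here: the substance is entirely supplied by Theorem~\ref{theo:2.1}, which amounts to saying that the commutative monoid of isomorphism classes of toric manifolds under direct product is free on the algebraically indecomposable classes, and cancellation is automatic in a free commutative monoid. The only point requiring care is the bookkeeping of multiplicities when the factors $C_k$ (or the $A_i$, $B_j$) are not pairwise non-isomorphic, which is handled by the multiset count above.
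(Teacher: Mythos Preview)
Your argument is correct and is precisely the intended deduction: the paper itself gives no further proof beyond stating that the corollary follows from Theorem~\ref{theo:2.1}, and your multiset bookkeeping is the standard way to spell this out. There is nothing to add.
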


\section{Direct decomposition of toric manifolds as smooth manifolds}

In this section, we will consider the direct decomposition of toric manifolds as smooth manifolds.
We say that a toric manifold $M$ is \textit{differentially indecomposable} if $M$ does not decompose into two toric manifolds of positive dimension as smooth manifolds. 
We note that the algebraic indecomposability does not imply the differential indecomposability for toric manifolds. 
For example, the Hirzebruch surface $F_a$ $(a\in\mathbb{Z})$ corresponding to the fan described below is algebraically indecomposable unless $a=0$ but diffeomorphic to $\mathbb{C}P^1 \times \mathbb{C}P^1$ as smooth manifolds if $a$ is even.
\begin{center}
 \begin{picture}(160,160)
  \put(80,80){\circle*{2}}
  \linethickness{0.5pt} \put(80,80){\vector(1,0){60}}
  \linethickness{0.5pt} \put(80,80){\vector(0,1){60}}
  \linethickness{1.5pt} \put(80,80){\vector(-1,1){50}}
  \linethickness{0.5pt} \put(80,80){\vector(0,-1){60}}
  \linethickness{0.2pt} \put(81,91){\line(1,-1){10}}
  \linethickness{0.2pt} \put(81,101){\line(1,-1){20}}
  \linethickness{0.2pt} \put(81,111){\line(1,-1){30}}
  \linethickness{0.2pt} \put(81,121){\line(1,-1){40}}
  \linethickness{0.2pt} \put(82,131){\line(1,-1){48}}
  \linethickness{0.2pt} \put(79,97){\line(-1,0){15}}
  \linethickness{0.2pt} \put(79,107){\line(-1,0){24}}
  \linethickness{0.2pt} \put(79,117){\line(-1,0){34}}
  \linethickness{0.2pt} \put(79,127){\line(-1,0){44}}
  \linethickness{0.2pt} \put(69,89){\line(1,-2){10}}
  \linethickness{0.2pt} \put(49,110){\line(1,-2){30}}
  \linethickness{0.2pt} \put(32,124){\line(1,-2){45}}
  \linethickness{0.2pt} \put(81,64){\line(1,1){14}}
  \linethickness{0.2pt} \put(81,54){\line(1,1){24}}
  \linethickness{0.2pt} \put(81,44){\line(1,1){34}}
  \linethickness{0.2pt} \put(81,34){\line(1,1){44}}
  \put(10,140){$(-1,a)$} \put(130,5){$a \in \mathbb{Z}$}
 \end{picture}
\end{center}

Toric manifolds of complex dimension one are diffeomorphic to $\mathbb{C}P^1$, and those of complex dimension two are diffeomorphic to $\mathbb{C}P^1 \times \mathbb{C}P^1$ or $\mathbb{C}P^2 \sharp q\overline{\mathbb{C}P^2}$ $(q \in \mathbb{Z}_{\geq 0})$.
%These are differentially indecomposable except $\mathbb{C}P^1 \times \mathbb{C}P^1$.
The purpose of this section is to prove the following theorem. 

\begin{thm} \label{theo:3.1}
Let $M_i$ $(1\le i\le k)$ and $M_j'$ $(1\le j\le \ell)$ be differentially indecomposable toric manifolds of complex dimension less than or equal to two. 
If $H^*(\prod_{i=1}^kM_i;\mathbb{Z})$ and $H^*(\prod_{j=1}^\ell M_j^\prime;\mathbb{Z})$ are isomorphic as graded rings, then $k=\ell$ and there exists an element $\sigma$ in the symmetric group $S_k$ on $k$ letters such that $M_i$ and $M_{\sigma(i)}^\prime$ are diffeomorphic for all $1\le i\le k$.
Therefore, the cohomological rigidity problem and the unique decomposition problem mentioned in the Introduction are both affirmative for products of differentially indecomposable toric manifolds of complex dimension less than or equal to two.
\end{thm}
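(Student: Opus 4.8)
The plan is to read off, from the graded ring $H^*(\prod_i M_i;\mathbb{Z})$ alone, the number and the diffeomorphism types of the factors. First I would record that a differentially indecomposable toric manifold of complex dimension at most two is, as a smooth manifold, either $\mathbb{C}P^1$ or one of the surfaces $\mathbb{C}P^2\sharp q\overline{\mathbb{C}P^2}$ $(q\ge 0)$: among complex two-dimensional toric manifolds only $\mathbb{C}P^1\times\mathbb{C}P^1$ splits smoothly, and each $\mathbb{C}P^2\sharp q\overline{\mathbb{C}P^2}$ is differentially indecomposable since its only conceivable smooth splitting would be into $\mathbb{C}P^1\times\mathbb{C}P^1$, whose intersection form is even whereas that of $\mathbb{C}P^2\sharp q\overline{\mathbb{C}P^2}$ is odd. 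As the diffeomorphism type of each block is determined by its type ($\mathbb{C}P^1$, or the value of $q$), it suffices to recover from the ring the number $k_0$ of $\mathbb{C}P^1$-factors, the number $m$ of surface factors, and the multiset $\{q_1,\dots,q_m\}$.

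The crucial point is that Betti numbers alone do not suffice: $\mathbb{C}P^1\times\mathbb{C}P^1$ and $\mathbb{C}P^2\sharp\overline{\mathbb{C}P^2}$ share the Poincar\'e polynomial $(1+t^2)^2$ yet have different numbers of indecomposable factors. I would resolve this with the mod $2$ cup-square. Over $\mathbb{F}_2$ the squaring map $S\colon H^2(-;\mathbb{F}_2)\to H^4(-;\mathbb{F}_2)$, $v\mapsto v\cup v$, is $\mathbb{F}_2$-linear, since the polarization term $2uv$ vanishes in characteristic two, and it is visibly determined by the integral graded ring. Writing a degree-two class as $v=\sum_s v_s$ along the K\"unneth decomposition, one has $v\cup v=\sum_s v_s\cup v_s$ mod $2$ because the remaining cross terms carry a factor $2$; thus $S$ is the direct sum of the squaring maps on the factors, landing in distinct K\"unneth summands of $H^4$. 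On a $\mathbb{C}P^1$-factor this map is zero, while on a factor $\mathbb{C}P^2\sharp q\overline{\mathbb{C}P^2}$ it is the mod $2$ intersection form, a nonzero $\mathbb{F}_2$-valued functional and hence of rank one. Therefore $\operatorname{rank}_{\mathbb{F}_2}S=m$, which recovers the number of surface factors. This is precisely the parity information ($\mathrm{Sq}^2$) that separates even from odd intersection forms, and I expect this step to be the heart of the argument.

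Once $m$ is known, the top nonvanishing degree of the ring gives the complex dimension $n=k_0+2m$, whence $k_0=n-2m$; here the exact value of $m$ is essential, since a miscount would confuse a $\mathbb{C}P^1\times\mathbb{C}P^1$ with a $\mathbb{C}P^2\sharp\overline{\mathbb{C}P^2}$. To recover the multiset $\{q_j\}$ I would use the Poincar\'e polynomial, written in $s=t^2$ as
\[
(1+s)^{k_0}\prod_{j=1}^m\bigl(s^2+(q_j+1)s+1\bigr).
\]
Dividing out the known factor $(1+s)^{k_0}$ leaves $Q(s)=\prod_{j}\bigl(s^2+(q_j+1)s+1\bigr)$, and unique factorization in $\mathbb{Q}[s]$ recovers the multiset of these palindromic quadratics: a value $q_j+1\ne 2$ gives a monic irreducible quadratic (its discriminant $(q_j+1)^2-4$ is a nonnegative square only when $q_j+1=2$) whose multiplicity counts the corresponding indices, while $q_j+1=2$ contributes the square $(s+1)^2$, whose number is likewise determined. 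This yields $\{q_j\}$.

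Finally, since a graded ring isomorphism preserves the grading, the Betti numbers, the integral cup-square and its mod $2$ reduction, it preserves all the invariants $n$, $m$, $k_0$ and $\{q_j\}$ extracted above. Hence two products as in the statement with isomorphic integral cohomology rings have the same multiset of indecomposable factors up to diffeomorphism, so $k=\ell$ and the factors agree after a permutation $\sigma$; reordering and applying factorwise diffeomorphisms then shows the two products themselves are diffeomorphic. This settles simultaneously the unique decomposition problem and the cohomological rigidity problem for these products. The only delicate point beyond bookkeeping is the $\mathrm{Sq}^2$-computation isolating $m$, which is exactly what breaks the Betti-number degeneracy.
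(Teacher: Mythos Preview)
Your argument is correct and takes a genuinely different route from the paper's. The paper works with the isotropic set $A(X;R)=\{u\in H^2(X;R)\setminus\{0\}\mid u^2=0\}$: over $R=\mathbb{R}$ it counts connected components of each dimension (two $1$-dimensional pieces for $\mathbb{C}P^1$, four for $\mathbb{C}P^2\sharp\overline{\mathbb{C}P^2}$, and two $q$-dimensional pieces for $q\ge 2$), and over $R=\mathbb{Z}/2$ it counts points; the two counts together solve for the multiplicities $m,m_q$. You instead extract the number $m$ of surface factors in one stroke as $\operatorname{rank}_{\mathbb{F}_2}$ of the linear cup-square $H^2(-;\mathbb{F}_2)\to H^4(-;\mathbb{F}_2)$, and then recover the multiset $\{q_j\}$ by factoring the Poincar\'e polynomial $(1+s)^{k_0}\prod_j(s^2+(q_j{+}1)s+1)$ in $\mathbb{Q}[s]$. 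Your method is more elementary and purely algebraic---no topology of real quadrics is needed---and the $\mathrm{Sq}^2$ step is exactly the parity invariant that breaks the Betti-number degeneracy between $(\mathbb{C}P^1)^2$ and $\mathbb{C}P^2\sharp\overline{\mathbb{C}P^2}$, just as the paper's $\mathbb{R}$-versus-$\mathbb{Z}/2$ comparison does. The paper's framework, on the other hand, scales more naturally to the generalization in Section~4: there the homeomorphism type of $A(-;\mathbb{R})\cong S^{p-1}\times S^{q-1}\times\mathbb{R}$ separates $p\mathbb{C}P^2\sharp q\overline{\mathbb{C}P^2}$ by the pair $(p,q)$, information that the Poincar\'e polynomial alone (which sees only $p+q$) and the mod~$2$ cup-square rank (which is $1$ for every such surface) cannot supply.
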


For the proof of this theorem, we consider 
\begin{equation} \label{eq:A}
A(X;R) = \{u \in H^2(X;R)\backslash \{0\} \mid u^2=0 \} 
\end{equation}
for a topological space $X$ and a commutative ring $R$. 

\begin{lem} \label{3.2}
Let $R$ be $\mathbb{Z}$ or a field, and let $X_i$ $(1\le i\le k)$ be a topological space such that $H^q(X_i;R)$ is finitely generated for any $q$ and $H^{1}(X_i;R)=H^3(X_i;R)=0$. 
Moreover, when $R=\mathbb{Z}$, we suppose that $H^q(X_i;\mathbb{Z})$ $(q\le 4)$ is a free module.
iToric manifolds satisfy these conditions.j
Then $$A(\prod_{i=1}^kX_i ; R) = \bigsqcup_{i=1}^k A(X_i;R).$$ 
\end{lem}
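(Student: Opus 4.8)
The plan is to read everything off the Künneth theorem by keeping track of cohomological degrees. Under the hypotheses the cohomology cross product is a graded ring homomorphism
\[
\bigotimes_{i=1}^k H^*(X_i;R)\longrightarrow H^*\Bigl(\prod_{i=1}^k X_i;R\Bigr)
\]
which is an isomorphism in degrees $\le 4$: over a field this is the ordinary Künneth formula, while over $\mathbb{Z}$ the finite generation and the freeness of $H^q(X_i;\mathbb{Z})$ for $q\le 4$ kill every $\mathrm{Tor}$ summand that can contribute in degrees $\le 4$ (each such summand pairs groups whose degrees sum to at most $5$, so one of them has degree $\le 4$, where $H^\bullet(X_i;\mathbb{Z})$ is free or zero). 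Writing $p_i$ for the projection onto the $i$-th factor, the hypothesis $H^1(X_i;R)=0$ then gives the internal direct sum $H^2(\prod_i X_i;R)=\bigoplus_{i=1}^k p_i^*H^2(X_i;R)$, so every class is uniquely $u=\sum_{i=1}^k u_i$ with $u_i\in p_i^*H^2(X_i;R)$.

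I would first record the two easy points. Each $p_i^*$ is an injective ring map, so $p_i^*A(X_i;R)\subseteq A(\prod_i X_i;R)$, and since the summands $p_i^*H^2(X_i;R)$ meet only in $0$ these subsets are pairwise disjoint; thus the right-hand side injects into the left. For the reverse inclusion I would expand, using that degree-two classes commute,
\[
u^2=\sum_{i=1}^k u_i^2+2\sum_{i<j}u_iu_j .
\]
In the degree-four part of $\bigotimes_i H^*(X_i;R)$ the term $u_i^2$ lies in the summand $H^4(X_i)\otimes\bigotimes_{m\ne i}H^0(X_m)$, whereas the cross term $u_iu_j$ lies in $H^2(X_i)\otimes H^2(X_j)\otimes\bigotimes_{m\ne i,j}H^0(X_m)$, and these summands are pairwise distinct. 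Hence $u^2=0$ forces, separately, $u_i^2=0$ for every $i$ and $2u_iu_j=0$ for every $i<j$.

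The crux, and the step I expect to be the genuine obstacle, is to pass from $2u_iu_j=0$ to the conclusion that at most one $u_i$ is nonzero. The external product $H^2(X_i)\otimes H^2(X_j)\to H^4(X_i\times X_j)$ is exactly the relevant Künneth summand and hence injective, and a tensor of two nonzero elements of free $\mathbb{Z}$-modules, or of two nonzero vectors, is again nonzero; so $u_iu_j=0$ already forces $u_i=0$ or $u_j=0$, and then $u\ne 0$ leaves exactly one nonzero $u_i$, which satisfies $u_i^2=0$ and so lies in $p_i^*A(X_i;R)$. The one real gap is the implication $2u_iu_j=0\Rightarrow u_iu_j=0$: this is immediate over $\mathbb{Z}$ and over any field with $\mathrm{char}\,R\ne 2$, but it breaks down in characteristic $2$, where all cross terms vanish automatically. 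In fact the statement itself is false there, for $X_1=X_2=\mathbb{C}P^1$ over $\mathbb{F}_2$ the class $x_1+x_2$ satisfies $(x_1+x_2)^2=2x_1x_2=0$, so it belongs to $A(\mathbb{C}P^1\times\mathbb{C}P^1;\mathbb{F}_2)$ yet to neither factor. I would therefore run the argument for $R=\mathbb{Z}$ or $\mathrm{char}\,R\ne 2$, which already covers every use needed for Theorem~\ref{theo:3.1}.
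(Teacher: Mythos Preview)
Your argument is the same as the paper's: K\"unneth gives $H^2(\prod_i X_i;R)=\bigoplus_i H^2(X_i;R)$ and the corresponding splitting of $H^4$, one expands $u^2=\sum_i u_i^2+2\sum_{i<j}u_iu_j$, and concludes from the independence of the summands. You are more careful than the paper about why the Tor terms vanish, which is welcome but not a different idea.

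Your characteristic-$2$ observation is correct and is in fact a gap in the paper itself. The lemma as stated (``$R$ a field'') is false for $R=\mathbb{F}_2$, and your example with $X_1=X_2=\mathbb{C}P^1$ shows it: $x_1+x_2$ squares to zero but lies in neither factor. The paper's one-line deduction ``So if $u^2=0$, then $u_i=0$ except one $i$'' tacitly uses that $2$ is a non-zero-divisor in $R$.

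Where you overreach is the final sentence. The restricted version does \emph{not} already cover every use in the paper: the proof of Theorem~\ref{theo:3.1} explicitly invokes Lemma~\ref{3.2} with $R=\mathbb{Z}/2$ to obtain $m+m_1=m'+m'_1$ (equation~\eqref{eq:3.3}), and the proof of Theorem~\ref{theo:4.1} does likewise. So simply excising characteristic $2$ leaves those arguments incomplete. A genuine fix would either recount $|A(\,\cdot\,;\mathbb{Z}/2)|$ directly without the disjoint-union claim, or replace the $\mathbb{Z}/2$ step by an integral argument (for instance, compare the $\mathbb{Z}$-ranks of the sublattices spanned by the one-dimensional components of $A(\,\cdot\,;\mathbb{R})$, in the spirit of the proof of Theorem~\ref{theo:4.1}).
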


%\noindent{\itshape Proof}.
\begin{proof}
By K\"{u}nneth formula, $H^2(\prod_{i=1}^kX_i;R)$ is isomorphic to $\bigoplus_{i=1}^k H^2(X_i;R)$.
 So an element $u$ in $H^2(\prod_{i=1}^k X_i;R)$ can be written as $u = u_1 + \dots + u_k$ $(u_i\in H^2(X_i;R))$.
Again, by K\"{u}nneth formula,
 \[ H^4(\prod_{i=1}^k X_i;R)\cong \Big(\bigoplus_{i=1}^kH^4(X_i;R)\Big)\oplus\Big(\bigoplus_{1\le i<j\le k}H^2(X_i;R)\otimes H^2(X_j;R)\Big) \]
 and via this isomorphism
 \[ u^2=\sum_{i=1}^k u_i^2+2\sum_{1\le i<j\le k}u_i\otimes u_j. \]
 So if $u^2=0$, then $u_i=0$ except one $i$.
 Therefore, the lemma holds.
%\qed
\end{proof}

Differentially indecomposable toric manifolds of complex dimension less than or equal to two are diffeomorphic to $\mathbb{C}P^1$ or $\mathbb{C}P^2 \sharp q\overline{\mathbb{C}P^2}$\ $(q \in \mathbb{Z}_{\geq 0})$.
Their cohomology rings are as follows:
\begin{equation} \label{eq:3.1}
\begin{split}
&H^*(\mathbb{C}P^1 ; R) \cong R[x]/(x^2=0) \\
&H^*(\mathbb{C}P^2 \sharp q\overline{\mathbb{C}P^2} ; R) \cong R[x,y_1,\dots,y_q]/\bigl(x^2 = -y_i^2,\  xy_i=0\ (\forall i),\ y_iy_j=0\ (i \neq j)\bigr) 
\end{split}
\end{equation}

\begin{lem} \label{3.3}
\begin{enumerate}
\item[(1)] $A(\mathbb{C}P^1 ; R) \cong \{ a \in R \backslash \{0\} \}.$
In particular, $A(\mathbb{C}P^1;\mathbb{R})$ consists of two one dimensional connected components, and $A(\mathbb{C}P^1;\mathbb{Z}/2)$ consists of one element. 
\item[(2)] $A(\mathbb{C}P^2 \sharp q\overline{\mathbb{C}P^2} ; R) \cong \{ (a,b_1, \dots ,b_q) \in R^{q+1}\backslash \{0\} \mid a^2 = b_1^2 + \dots + b_q^2 \}.$
In particular, $A(\mathbb{C}P^2;\mathbb{R})$ and $A(\mathbb{C}P^2;\mathbb{Z}/2)$ are empty, $A(\mathbb{C}P^2 \sharp \overline{\mathbb{C}P^2} ; \mathbb{R})$ consists of four one dimensional connected components, and $A(\mathbb{C}P^2 \sharp \overline{\mathbb{C}P^2} ; \mathbb{Z}/2)$ consists of one element.
When $q\ge 2$, $A(\mathbb{C}P^2 \sharp q\overline{\mathbb{C}P^2} ; \mathbb{R})$ consists of two $q$ dimensional connected components. 
\end{enumerate}
\end{lem}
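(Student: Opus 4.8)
The plan is to read off both $A(\mathbb{C}P^1;R)$ and $A(\mathbb{C}P^2\sharp q\overline{\mathbb{C}P^2};R)$ directly from the ring presentations recorded in \eqref{eq:3.1}. In each case $H^2(X;R)$ is a free $R$-module on the listed degree-two generators, and the square of any degree-two class is forced into $H^4(X;R)\cong R$, in which $x^2$ is a free generator (equivalently, $x^2$ is not a zero divisor). So the whole computation reduces to expanding $u^2$ by the ring relations and asking when the resulting multiple of $x^2$ vanishes.

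For part (1) the computation is immediate: a general class has the form $u=ax$ with $a\in R$, and the relation $x^2=0$ gives $u^2=a^2x^2=0$ for every $a$. Hence $A(\mathbb{C}P^1;R)$ is the set of nonzero multiples of $x$, which I identify with $R\setminus\{0\}$ via $ax\mapsto a$. Specializing, $\mathbb{R}\setminus\{0\}$ splits into the two one-dimensional components $\{a>0\}$ and $\{a<0\}$, while $(\mathbb{Z}/2)\setminus\{0\}=\{1\}$ is a single point.

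For part (2) I would write a general degree-two class as $u=ax+\sum_{i=1}^{q}b_iy_i$ and expand $u^2$ using the relations $xy_i=0$, $y_iy_j=0$ $(i\neq j)$ and $y_i^2=-x^2$. All cross terms drop out and the diagonal terms collapse, leaving $u^2=\bigl(a^2-\sum_{i=1}^{q}b_i^2\bigr)x^2$. Since $x^2$ is a free generator of $H^4(X;R)\cong R$, we get $u^2=0$ if and only if $a^2=b_1^2+\dots+b_q^2$, which gives exactly the stated identification of $A(\mathbb{C}P^2\sharp q\overline{\mathbb{C}P^2};R)$ with the nonzero solutions of that equation in $R^{q+1}$.

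It then remains to read off the topology of this solution set in the two ground rings. Over $\mathbb{Z}/2$ I count the finitely many solutions by hand, recovering that $A(\mathbb{C}P^2;\mathbb{Z}/2)=\emptyset$ and that $A(\mathbb{C}P^2\sharp\overline{\mathbb{C}P^2};\mathbb{Z}/2)$ is the single point $(1,1)$. Over $\mathbb{R}$ the set $\{a^2=\sum b_i^2\}\setminus\{0\}$ is a punctured quadric cone, and counting its connected components is the only step needing a small argument: for $q=0$ it is empty, for $q=1$ the equation factors as $a=\pm b_1$ and yields four rays, and for $q\ge 2$ projecting to the $b$-coordinates exhibits each sheet $a>0$ and $a<0$ as a copy of the connected set $\mathbb{R}^q\setminus\{0\}$, so there are exactly two $q$-dimensional components. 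This distinction between $q=1$ and $q\ge 2$, resting on the connectivity of $\mathbb{R}^q\setminus\{0\}$, is the main (and only mildly delicate) point; everything else is a direct consequence of the relations in \eqref{eq:3.1}.
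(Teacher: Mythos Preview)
Your argument is correct and follows the same route as the paper: write a general degree-two class in terms of the generators from \eqref{eq:3.1}, expand its square using the given relations, and read off the defining equation for $A(X;R)$. The paper leaves the ``In particular'' assertions about connected components and cardinalities as immediate consequences, whereas you spell them out; your additional analysis of the quadric cone for $q\ge 2$ via projection to the $b$-coordinates is a clean way to justify the two-component claim. One small slip: your opening plan asserts that ``in each case'' $H^4(X;R)\cong R$ with $x^2$ a free generator, but this fails for $X=\mathbb{C}P^1$, where $H^4=0$ and $x^2=0$; fortunately your actual proof of (1) uses $x^2=0$ correctly, so the error is confined to the preamble.
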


%\noindent{\itshape Proof}. 
\begin{proof}
(1) This easily follows from the former isomorphism in \eqref{eq:3.1}.

(2) Using the latter isomorphism in \eqref{eq:3.1}, one can write an element $u$ in $H^2(\mathbb{C}P^2 \sharp q\overline{\mathbb{C}P^2} ; R)$ as 
\[ u = ax + b_1y_1 + \dots + b_qy_q\ \ (a, b_1, \dots, b_q \in R), \]
so we have 
$u^2 = (a^2 - b_1^2 - \dots -b_q^2)x^2$, which implies (2).  
%\qed
\end{proof}

%\bigskip
%\noindent{\itshape Proof of Theorem~\ref{theo:3.1}}.
\begin{proof}[Proof of Theorem~\ref{theo:3.1}] 
Let $m$ (resp, $m_q$) be the number of $M_i$'s diffeomorphic to $\mathbb{C}P^1$ (resp, $\mathbb{C}P^2\sharp q\overline{\mathbb{C}P^2}$). 
Similarly, let $m'$ (resp, $m'_q$) be the number of $M'_j$'s diffeomorphic to $\mathbb{C}P^1$ (resp, $\mathbb{C}P^2\sharp q\overline{\mathbb{C}P^2}$). 
Then
\begin{equation} \label{eq:3.1-1}
\begin{split}
M&:=\prod_{i=1}^kM_i=(\mathbb{C}P^1)^m\times \prod_{q\ge 0} (\mathbb{C}P^2\sharp q\overline{\mathbb{C}P^2})^{m_q}\\
M^\prime &:=\prod_{j=1}^\ell M_j^\prime=(\mathbb{C}P^1)^{m'}\times \prod_{q\ge 0} (\mathbb{C}P^2\sharp q\overline{\mathbb{C}P^2})^{m_q'}.
\end{split}
\end{equation}

By assumption, $H^*(M;\mathbb{Z})$ and $H^*(M^\prime;\mathbb{Z})$ are isomorphic as graded rings, and an isomorphism between them induces an isomorphism between $H^*(M;R)$ and $H^*(M^\prime;R)$ for any commutative ring $R$ and a bijection between $A(M;R)$ and $A(M^\prime;R)$.
When $R=\mathbb{R}$, we compare the number of connected components of dimension $t$ in $A(M;\mathbb{R})$ and $A(M^\prime;\mathbb{R})$.
Since the bijection between $A(M;\mathbb{R})$ and $A(M^\prime;\mathbb{R})$ is a homeomorphism, we obtain 
\begin{equation} \label{eq:3.2}
2m+4m_1=2m'+4m_1',\qquad
2m_t=2m_t' \quad (t\ge 2)
\end{equation}
from Lemmas~\ref{3.2} and \ref{3.3}.
Moreover, comparing the number of elements in $A(M;\mathbb{Z}/2)$ and $A(M^\prime;\mathbb{Z}/2)$, we obtain 
\begin{equation} \label{eq:3.3}
m+m_1=m'+m_1'
\end{equation}
from the fact $m_t=m_t'$ $(t\ge 2)$ in \eqref{eq:3.2}, Lemmas~\ref{3.2} and \ref{3.3}.
The identities \eqref{eq:3.2} and \eqref{eq:3.3} imply $m=m'$ and $m_t=m_t'$ $(t\ge 1)$.
These together with the equality of the dimensions of $M$ and $M^\prime$ (which are respectively $m+2\sum_{t\ge 0} m_t$ and $m'+2\sum_{t\ge 0} m_t'$ by \eqref{eq:3.1-1}) imply $m_0=m_0'$.
Therefore the theorem is proved. 
%\qed
\end{proof}

%\bigskip

The following corollary follows from Theorem~\ref{theo:3.1}.

\begin{cor}[cancellation] 
 Let $M$, $M^\prime$ and $M^{\prime \prime}$ be products of toric manifolds of complex dimension less than or equal to two.  If $M \times M^{\prime \prime}$ and $M^\prime \times M^{\prime \prime}$ are diffeomorphic, then so are $M$ and $M^\prime$. 
\end{cor}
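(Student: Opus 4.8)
The plan is to reduce the statement to Theorem~\ref{theo:3.1} by first refining each of the three products into differentially indecomposable factors and then cancelling at the level of multisets of diffeomorphism types. The only real content is the invocation of Theorem~\ref{theo:3.1}; everything else is bookkeeping.

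First I would recall that every toric manifold of complex dimension at most two is diffeomorphic to $\mathbb{C}P^1$, to $\mathbb{C}P^1\times\mathbb{C}P^1$, or to $\mathbb{C}P^2\sharp q\overline{\mathbb{C}P^2}$ $(q\ge 0)$, and that among these only $\mathbb{C}P^1\times\mathbb{C}P^1$ fails to be differentially indecomposable, splitting as a product of two copies of $\mathbb{C}P^1$. Hence each of $M$, $M'$, and $M''$, being a product of toric manifolds of complex dimension at most two, can be rewritten as a product of differentially indecomposable toric manifolds of complex dimension at most two. I would record these refined decompositions as $M=\prod_a N_a$, $M'=\prod_b N'_b$, and $M''=\prod_c N''_c$, where all the $N_a$, $N'_b$, $N''_c$ are differentially indecomposable of complex dimension $\le 2$.

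Next, since $M\times M''$ and $M'\times M''$ are diffeomorphic, their integral cohomology rings are isomorphic as graded rings. Now $M\times M''=\big(\prod_a N_a\big)\times\big(\prod_c N''_c\big)$ and $M'\times M''=\big(\prod_b N'_b\big)\times\big(\prod_c N''_c\big)$ are both exhibited as products of differentially indecomposable toric manifolds of complex dimension at most two. Applying Theorem~\ref{theo:3.1} to these two presentations, I conclude that the multiset of diffeomorphism types $\{N_a\}\sqcup\{N''_c\}$ coincides with the multiset $\{N'_b\}\sqcup\{N''_c\}$.

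Finally I would cancel the common part $\{N''_c\}$: since multisets over the set of diffeomorphism types form a cancellative (free commutative) monoid under disjoint union, removing the factors coming from $M''$ from both sides yields that $\{N_a\}$ and $\{N'_b\}$ agree as multisets. Thus $M$ and $M'$ have the same differentially indecomposable factors with the same multiplicities, and so are diffeomorphic. I do not expect a genuine obstacle here; the only points that need care are that the initial refinement into differentially indecomposable factors is legitimate (which rests on the classification above together with the fact that $\mathbb{C}P^2\sharp q\overline{\mathbb{C}P^2}$ is distinguished from $\mathbb{C}P^1\times\mathbb{C}P^1$, for instance by the parity of the intersection form), and that the multiset cancellation is performed only after Theorem~\ref{theo:3.1} has converted the diffeomorphism into an equality of factor multisets.
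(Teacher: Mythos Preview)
Your argument is correct and is exactly the intended one: the paper simply records that the corollary follows from Theorem~\ref{theo:3.1}, and what you wrote spells out the straightforward multiset-cancellation bookkeeping behind that remark. There is nothing to add.
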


\section{Simply connected compact 4-manifolds with $(S^1)^2$-actions}

In this section, we show that the idea developed to prove Theorem~\ref{theo:3.1} works for products of $\mathbb{C}P^1$ and simply connected compact smooth 4-manifolds with smooth actions of compact torus $(S^1)^2$.
By Orlik-Raymond (\cite{or-ra70}), these 4-manifolds are diffeomorphic to 
\begin{equation} \label{eq:4.0}
S^4 \sharp p\mathbb{C}P^2 \sharp q\overline{\mathbb{C}P^2} \sharp r(\mathbb{C}P^1 \times \mathbb{C}P^1)\quad (p+q+r\ge 0).
\end{equation}

\begin{prop} \label{prop:4.1}
A manifold in \eqref{eq:4.0} is diffeomorphic to one of the following:
\[ S^4, \quad p\mathbb{C}P^2 \sharp q\overline{\mathbb{C}P^2}\ (p\ge q\ge 0,\ p+q\ge 1), \quad r(\mathbb{C}P^1 \times \mathbb{C}P^1)\ (r\ge 1). \]
Moreover these manifolds are not diffeomorphic to each other.
\end{prop}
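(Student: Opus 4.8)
The plan is to reduce each manifold in \eqref{eq:4.0} to one of the three listed families by a short sequence of standard connected-sum identities, and then to separate the three families using classical invariants of the intersection form. First I would record the intersection forms of the pieces: $\mathbb{C}P^2$ and $\overline{\mathbb{C}P^2}$ carry the odd rank-one forms $\langle 1\rangle$ and $\langle -1\rangle$, the summand $\mathbb{C}P^1\times\mathbb{C}P^1$ carries the even hyperbolic form $H$, and $S^4$ may be dropped since $S^4\sharp X\cong X$ for every closed $X$. The one piece of genuine $4$-dimensional geometry I need is the blow-up identity
\[
(\mathbb{C}P^1\times\mathbb{C}P^1)\sharp\overline{\mathbb{C}P^2}\cong\mathbb{C}P^2\sharp 2\overline{\mathbb{C}P^2},
\]
together with its orientation reversal $(\mathbb{C}P^1\times\mathbb{C}P^1)\sharp\mathbb{C}P^2\cong 2\mathbb{C}P^2\sharp\overline{\mathbb{C}P^2}$. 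Both record the fact that blowing up one point on $\mathbb{C}P^1\times\mathbb{C}P^1$ and blowing up two points on $\mathbb{C}P^2$ produce the same (degree $7$ del Pezzo) surface, where blowing up a point amounts to taking the connected sum with $\overline{\mathbb{C}P^2}$.

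Next I would run the reduction. After discarding the $S^4$ factor, a manifold in \eqref{eq:4.0} is $p\mathbb{C}P^2\sharp q\overline{\mathbb{C}P^2}\sharp r(\mathbb{C}P^1\times\mathbb{C}P^1)$. If $p+q\ge 1$ then some $\mathbb{C}P^2$ or $\overline{\mathbb{C}P^2}$ factor is present, so one application of the appropriate identity above trades a single $\mathbb{C}P^1\times\mathbb{C}P^1$ factor for one more $\mathbb{C}P^2$ and one more $\overline{\mathbb{C}P^2}$; after one step both coefficients are positive, so I can iterate and absorb all $r$ copies, arriving at $(p+r)\mathbb{C}P^2\sharp(q+r)\overline{\mathbb{C}P^2}$. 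Reversing the orientation if necessary to make the first coefficient the larger one puts this in the second family. The remaining cases are immediate: $p=q=r=0$ gives $S^4$, while $p=q=0$ with $r\ge 1$ is already $r(\mathbb{C}P^1\times\mathbb{C}P^1)$, the third family.

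Then I would show the three families are pairwise non-diffeomorphic, using that the isomorphism class of the intersection form---up to overall sign, since orientation may be reversed---is a diffeomorphism invariant. The second Betti number isolates $S^4$ as the only manifold with $b_2=0$. The \emph{parity} of the form, equivalently the vanishing of $w_2$ (i.e.\ whether the manifold is spin), separates the odd forms $p\langle 1\rangle\oplus q\langle -1\rangle$ of the second family from the even forms $rH$ of the third. Inside the second family the rank $p+q$ together with the absolute value $|p-q|$ of the signature recovers the pair $(p,q)$ normalized by $p\ge q$; inside the third family $b_2=2r$ recovers $r$.

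The main obstacle is the blow-up identity itself: it is a statement about honest diffeomorphism, not merely an isomorphism of intersection forms, so the coincidence of these two notions over these small standard pieces must be justified rather than assumed. I would obtain it from the algebraic-geometric description of the degree-$7$ del Pezzo surface, or quote it from the standard $4$-manifold literature; granting it, everything else is elementary bookkeeping with connected sums and the invariants above.
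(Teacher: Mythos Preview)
Your proposal is correct and follows essentially the same route as the paper: both hinge on the blow-up identity $(\mathbb{C}P^1\times\mathbb{C}P^1)\sharp\overline{\mathbb{C}P^2}\cong\mathbb{C}P^2\sharp 2\overline{\mathbb{C}P^2}$ (the paper justifies it via toric fans, you via the degree-$7$ del Pezzo surface---the same fact), its orientation-reversed companion obtained from the orientation-reversing self-diffeomorphism of $\mathbb{C}P^1\times\mathbb{C}P^1$, and then separation by spin/non-spin together with Euler characteristic (equivalently $b_2$) and signature.
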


%\noindent{\itshape Proof}.  
\begin{proof}
This proposition must be known but since there seems no literature, we shall give a proof.  

\medskip
\noindent{\bf Claim} 
${\mathbb{C}P^2} \sharp (\mathbb{C}P^1 \times \mathbb{C}P^1)$ and $\overline{\mathbb{C}P^2} \sharp (\mathbb{C}P^1 \times \mathbb{C}P^1)$ are diffeomorphic to $\mathbb{C}P^2 \sharp 2\overline{\mathbb{C}P^2}$.

\medskip

The fan corresponding to the blow-up of $\mathbb{C}P^1 \times \mathbb{C}P^1$ and that of $\mathbb{C}P^2 \sharp \overline{\mathbb{C}P^2}$ are isomorphic, so $\overline{\mathbb{C}P^2} \sharp (\mathbb{C}P^1 \times \mathbb{C}P^1)$ and $\mathbb{C}P^2 \sharp 2\overline{\mathbb{C}P^2}$ are isomorphic as algebraic varieties, in particular, $\overline{\mathbb{C}P^2} \sharp (\mathbb{C}P^1 \times \mathbb{C}P^1)$ is diffeomorphic to $\mathbb{C}P^2 \sharp 2\overline{\mathbb{C}P^2}$.

Moreover ${\mathbb{C}P^2} \sharp (\mathbb{C}P^1 \times \mathbb{C}P^1)$ and $\overline{\mathbb{C}P^2} \sharp (\overline{\mathbb{C}P^1 \times \mathbb{C}P^1})$ are diffeomorphic, and since there is an orientation preserving diffeomorphism from $\overline{\mathbb{C}P^1 \times \mathbb{C}P^1}$ to $\mathbb{C}P^1 \times \mathbb{C}P^1$ (i.e., an orientation reversing diffeomorphism from $\mathbb{C}P^1 \times \mathbb{C}P^1$ to itself), $\overline{\mathbb{C}P^2} \sharp (\overline{\mathbb{C}P^1 \times \mathbb{C}P^1})$ is diffeomorphic to $\overline{\mathbb{C}P^2} \sharp ({\mathbb{C}P^1 \times \mathbb{C}P^1})$. 
So ${\mathbb{C}P^2} \sharp (\mathbb{C}P^1 \times \mathbb{C}P^1)$ and $\overline{\mathbb{C}P^2} \sharp (\mathbb{C}P^1 \times \mathbb{C}P^1)$ are diffeomorphic. 
Therefore the claim is proved.

\medskip

From the Claim above and the fact that $p\mathbb{C}P^2 \sharp q\overline{\mathbb{C}P^2}$ and $q\mathbb{C}P^2 \sharp p\overline{\mathbb{C}P^2}$ are diffeomorphic, we see that a manifold in \eqref{eq:4.0} is diffeomorphic to one of the manifolds in Proposition~\ref{prop:4.1}. 

We shall prove that the manifolds in Proposition~\ref{prop:4.1} are not diffeomorphic to each other.
The manifolds $p\mathbb{C}P^2 \sharp q\overline{\mathbb{C}P^2}$ are not spin manifolds (i.e., their second Stiefel-Whitney classes do not vanish) while $r(\mathbb{C}P^1 \times \mathbb{C}P^1)$ are spin manifolds. 
Therefore, they are not homotopy equivalent, in particular, not diffeomorphic. 
Euler characteristic $\chi$ and the absolute value of signature $\sigma$ are homotopy invariants, and
\begin{align*}
&\chi(p\mathbb{C}P^2 \sharp q\overline{\mathbb{C}P^2})=p+q+2,& &\sigma(p\mathbb{C}P^2 \sharp q\overline{\mathbb{C}P^2})=p-q\\
&\chi(r(\mathbb{C}P^1 \times \mathbb{C}P^1))=2r+2,& &\sigma(r(\mathbb{C}P^1 \times \mathbb{C}P^1))=0\\
&\chi(S^4)=2
\end{align*}
so the manifolds in Proposition~\ref{prop:4.1} are not homotopy equivalent to each other, in particular, they are not diffeomorphic to each other. 
%\qed
\end{proof}

%\medskip

We find $A(M;R)$ in \eqref{eq:A} for the manifolds $M$ in Proposition~\ref{prop:4.1} and any commutative ring $R$.
Since 
\begin{equation*} 
\begin{split}
H^*(&p\mathbb{C}P^2 \sharp q\overline{\mathbb{C}P^2};R) \\
&\cong R[x_1,\dots,x_p,y_1,\dots,y_q]/(x_i^2=-y_j^2,\ x_iy_j=0 (\forall i,j),\ x_ix_j=0,\ y_iy_j=0 (\forall i \neq j)),\\
H^*(&r(\mathbb{C}P^1 \times \mathbb{C}P^1);R) \\
&\cong R[z_1,\dots,z_r,w_1,\dots,w_r]/(z_iw_i=z_jw_j,\ z_iz_j=w_iw_j=0 (\forall i,j),\ z_iw_j=0 (\forall i \neq j)),\\
H^*(&S^4;R) \cong R[x]/(x^2=0), 
\end{split}
\end{equation*}
we see that 
\begin{align}
A(&p\mathbb{C}P^2 \sharp q\overline{\mathbb{C}P^2} ; R)\notag\\
 &\cong \{(a_1, \dots ,a_p,b_1, \dots ,b_q) \in R^{p+q}\backslash \{0\} \mid a_1^2 + \dots + a_p^2 = b_1^2 + \dots + b_q^2 \},\label{eq:4.1-1}\\
A(&r(\mathbb{C}P^1 \times \mathbb{C}P^1) ; R) \notag\\
&\cong \{(c_1, \dots ,c_r,d_1, \dots ,d_r) \in R^{2r}\backslash \{0\} \mid c_1d_1 + \dots + c_rd_r = 0 \}, \label{eq:4.1-2}\\
A(&S^4;R) = \emptyset.\notag
\end{align}

\begin{lem} \label{4.2}
\begin{enumerate}
\item[(1)] $A(p\mathbb{C}P^2; \mathbb{R})$ is empty.
\item[(2)] When $p\ge q\ge 1$, $A(p\mathbb{C}P^2 \sharp q\overline{\mathbb{C}P^2};\mathbb{R})$ is homeomorphic to $S^{p-1}\times S^{q-1}\times\mathbb{R}$. 
\item[(3)] $A(r(\mathbb{C}P^1 \times \mathbb{C}P^1);\mathbb{R})$ is homeomorphic to $S^{r-1}\times S^{r-1}\times \mathbb{R}$.
\end{enumerate}
\end{lem}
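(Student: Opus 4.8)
The plan is to work directly with the explicit descriptions of $A(\,\cdot\,;\mathbb{R})$ as real affine quadric cones minus the origin recorded in \eqref{eq:4.1-1} and \eqref{eq:4.1-2}, and to exploit the fact that over $\mathbb{R}$ a sum of squares is positive-definite. Part (1) is then immediate: setting $q=0$ in \eqref{eq:4.1-1}, the defining relation becomes $a_1^2+\dots+a_p^2=0$, which over $\mathbb{R}$ forces every $a_i=0$; since the origin is excluded, the set is empty.

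For (2) I would parametrize the set by its common ``radius.'' Writing $a=(a_1,\dots,a_p)$, $b=(b_1,\dots,b_q)$, the relation $a_1^2+\dots+a_p^2=b_1^2+\dots+b_q^2$ with $(a,b)\neq 0$ forces this common value to equal $t^2$ for a unique $t>0$ (were it $0$, both $a$ and $b$ would vanish). Then $a/t\in S^{p-1}$ and $b/t\in S^{q-1}$, and the map
\[ (\alpha,\beta,t)\longmapsto (t\alpha,\,t\beta),\qquad S^{p-1}\times S^{q-1}\times(0,\infty)\longrightarrow A(p\mathbb{C}P^2\sharp q\overline{\mathbb{C}P^2};\mathbb{R}), \]
is a continuous bijection whose inverse $(a,b)\mapsto(a/t,b/t,t)$, with $t=(a_1^2+\dots+a_p^2)^{1/2}$, is also continuous. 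Composing with any homeomorphism $(0,\infty)\cong\mathbb{R}$ yields the claim (the hypothesis $p\ge q$ is only a normalization carried over from Proposition~\ref{prop:4.1}, as the target $S^{p-1}\times S^{q-1}\times\mathbb{R}$ is symmetric in $p$ and $q$).

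For (3), rather than analyze the indefinite bilinear form $c_1d_1+\dots+c_rd_r$ on its own, I would diagonalize it by the linear change of coordinates
\[ a_i=\frac{c_i+d_i}{\sqrt 2},\qquad b_i=\frac{c_i-d_i}{\sqrt 2}\qquad(1\le i\le r), \]
under which $a_1^2+\dots+a_r^2-(b_1^2+\dots+b_r^2)=2(c_1d_1+\dots+c_rd_r)$. This is a linear automorphism of $\mathbb{R}^{2r}$ fixing the origin, hence a homeomorphism, and it carries the quadric $c_1d_1+\dots+c_rd_r=0$ onto the quadric $a_1^2+\dots+a_r^2=b_1^2+\dots+b_r^2$. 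It therefore restricts to a homeomorphism from the set in \eqref{eq:4.1-2} onto the set in \eqref{eq:4.1-1} with $p=q=r$, reducing (3) to the case $p=q=r$ of (2) and giving $S^{r-1}\times S^{r-1}\times\mathbb{R}$.

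The only genuinely substantive step is the recognition in (3) that the indefinite form $\sum_i c_id_i$ and the difference of sums of squares $\sum_i a_i^2-\sum_i b_i^2$ are equivalent over $\mathbb{R}$, which converts the apparently different quadric \eqref{eq:4.1-2} into an instance already settled by (2). The remaining points are routine, and I expect the main (minor) obstacle to be bookkeeping: checking that the origin is excluded consistently so that the ``radius'' $t$ is strictly positive and continuous on the relevant set, which is exactly what makes the inverse maps in (2) well defined and continuous.
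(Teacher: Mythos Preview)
Your proof is correct and follows essentially the same route as the paper: part (1) is immediate from positive-definiteness, part (2) slices the quadric by the common radius to obtain $S^{p-1}\times S^{q-1}\times\mathbb{R}_{>0}$, and part (3) diagonalizes the bilinear form $\sum c_id_i$ (the paper writes $c_i=a_i+b_i$, $d_i=a_i-b_i$, your rescaled version is equivalent) to reduce to the case $p=q=r$ of (2). Your write-up is somewhat more careful than the paper's about the explicit inverse map and the exclusion of the origin, but the underlying argument is the same.
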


%\noindent{\itshape Proof}.
\begin{proof} 
(1) This easily follows from \eqref{eq:4.1-1}.
 
 (2)  For each positive real number $c$, the set 
 \[ \{ (a_1, \dots, a_p, b_1, \dots, b_q) \in \mathbb{R}^{p+q} \backslash \{0\} \mid a_1^2 + \dots +a_p^2 = b_1^2 + \dots + b_q^2 = c \}\]
is homeomorphic to the product of spheres $S^{p-1} \times S^{q-1}$.  So, $A(p\mathbb{C}P^2 \sharp q\overline{\mathbb{C}P^2} ; \mathbb{R})$ is homeomorphic to $S^{p-1} \times S^{q-1} \times \mathbb{R}_{>0}$ by \eqref{eq:4.1-1} and hence to $S^{p-1} \times S^{q-1} \times \mathbb{R}$. 
 
(3) For each $i$, we change the variables in \eqref{eq:4.1-2} as follows:
 \begin{equation*}
  c_i = a_i + b_i,\ d_i = a_i - b_i.
 \end{equation*}
 Then one sees that $A(r(\mathbb{C}P^1 \times \mathbb{C}P^1);\mathbb{R})$ is homeomorphic to $A(r\mathbb{C}P^2 \sharp r\overline{\mathbb{C}P^2};\mathbb{R})$.  
%\qed
\end{proof}

\begin{lem} \label{4.3}
For a finite set $A$, we denote the cardinality of $A$ by $|A|$.
Then
\begin{enumerate}
\item[(1)] $|A(p\mathbb{C}P^2 \sharp p\overline{\mathbb{C}P^2} ; \mathbb{Z}/2)|=2^{2p-1}-1$,
\item[(2)] $|A(r(\mathbb{C}P^1 \times \mathbb{C}P^1) ; \mathbb{Z}/2)|=2^{2r-1} + 2^{r-1} - 1$.
\end{enumerate}
\end{lem}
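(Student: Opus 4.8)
The plan is to count, for each case, the number of nonzero solutions over the field $\mathbb{Z}/2$ of the defining quadratic relation from \eqref{eq:4.1-1} and \eqref{eq:4.1-2}. Note that over $\mathbb{Z}/2$ we have $a^2=a$ for every $a$, so squares disappear and the quadratic equations linearize or simplify substantially. First I would treat part (1): by \eqref{eq:4.1-1} with $p=q$, the set $A(p\mathbb{C}P^2 \sharp p\overline{\mathbb{C}P^2};\mathbb{Z}/2)$ consists of nonzero vectors $(a_1,\dots,a_p,b_1,\dots,b_p)\in(\mathbb{Z}/2)^{2p}$ satisfying $\sum a_i^2=\sum b_j^2$. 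Since $a_i^2=a_i$ over $\mathbb{Z}/2$, this becomes the single linear equation $\sum_i a_i + \sum_j b_j = 0$. A nonzero linear functional on a vector space of dimension $2p$ over $\mathbb{Z}/2$ has a kernel of size $2^{2p-1}$; removing the zero vector (which lies in the kernel) gives $2^{2p-1}-1$, proving (1).

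Part (2) is the genuinely interesting computation, since the relevant relation $\sum_i c_i d_i = 0$ from \eqref{eq:4.1-2} is a nondegenerate quadratic form over $\mathbb{Z}/2$ rather than a linear one, so the linearization trick does not directly apply. The plan is to count the solutions of the hyperbolic quadratic form $Q(c,d)=\sum_{i=1}^r c_i d_i$ over $\mathbb{Z}/2$ in $2r$ variables. This is a standard count for a nondegenerate quadratic form of plus type (maximal Witt index): the number of isotropic vectors, including zero, is $2^{2r-1}+2^{r-1}$. I would derive this by induction on $r$, peeling off one hyperbolic plane $c_i d_i$ at a time, or alternatively by directly counting: for each fixed value of the first $2r-2$ coordinates one solves a single equation $c_r d_r = Q'$ in the remaining pair and tabulates the four possibilities. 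Subtracting the zero vector from the total $2^{2r-1}+2^{r-1}$ yields $|A(r(\mathbb{C}P^1\times\mathbb{C}P^1);\mathbb{Z}/2)| = 2^{2r-1}+2^{r-1}-1$, proving (2).

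The main obstacle is the correct handling of the quadratic form in part (2): one must be careful that over $\mathbb{Z}/2$ the form $\sum c_i d_i$ is an \emph{alternating} bilinear form's associated quadratic form, and the isotropy count differs from the naive expectation precisely because of the $+2^{r-1}$ correction term coming from the plus-type (hyperbolic) structure. I would verify the base case $r=1$ explicitly ($c_1 d_1 = 0$ has the solutions $(0,0),(1,0),(0,1)$, i.e. $3 = 2^1+2^0$ including zero, so $|A|=2$), which matches $2^{2\cdot1-1}+2^{1-1}-1 = 2+1-1 = 2$, and then carry out the inductive step. One should also double-check the change of variables $c_i=a_i+b_i$, $d_i=a_i-b_i$ from Lemma~\ref{4.2}(3) is \emph{not} available over $\mathbb{Z}/2$ (since $2$ is not invertible), confirming that the two counts in (1) and (2) are genuinely different and hence the $\mathbb{Z}/2$ cardinalities will serve to distinguish the two families of manifolds in the subsequent argument.
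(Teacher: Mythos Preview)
Your proposal is correct and, for part~(2), follows exactly the paper's route: induction on $r$ by peeling off one hyperbolic pair $(c_r,d_r)$ and splitting according to whether $c_rd_r=0$ or $1$. Your base case and inductive outline match the paper's computation precisely.

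For part~(1) there is a small but pleasant difference worth noting. The paper observes that $a_1^2+\dots+a_p^2=b_1^2+\dots+b_p^2$ over $\mathbb{Z}/2$ means the total number of $1$'s among the $2p$ coordinates is even, and then sums the even binomial coefficients $\sum_{k\ \text{even}}\binom{2p}{k}=2^{2p-1}$. You instead use the Frobenius identity $a^2=a$ to rewrite the relation as the single linear equation $\sum_i a_i+\sum_j b_j=0$ and invoke the size of the kernel of a nonzero linear functional. Your argument is a touch slicker and makes it transparent why the answer is exactly half of $2^{2p}$; the paper's version is more explicitly combinatorial. Either way the content is the same.

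Your closing remark that the change of variables $c_i=a_i+b_i$, $d_i=a_i-b_i$ from Lemma~\ref{4.2}(3) is unavailable over $\mathbb{Z}/2$ (since $a_i+b_i=a_i-b_i$ there, so the map is not invertible) is a nice sanity check explaining why the counts in (1) and (2) genuinely differ; the paper does not make this point explicitly.
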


%\noindent{\itshape Proof}.
\begin{proof}
(1) By \eqref{eq:4.1-1}, we count the number of elements $(a_1,\dots,a_p,b_1,\dots,b_p)\in (\mathbb{Z}/2)^{2p}\backslash\{0\}$ satisfying $$a_1^2 + \dots + a_p^2 = b_1^2 + \dots + b_p^2.$$ This equation is equivalent to the existence of even number ofg$1$hin $a_1,\dots,a_p,b_1,\dots,b_p$.
Therefore,
 \[
|A(p\mathbb{C}P^2 \sharp p\overline{\mathbb{C}P^2} ; \mathbb{Z}/2)| + 1
   = \begin{pmatrix} 2p \\ 0 \end{pmatrix} + \begin{pmatrix} 2p \\ 2 \end{pmatrix} + \dots + \begin{pmatrix} 2p \\ 2p \end{pmatrix} = 2^{2p-1}.
\]    

(2) By \eqref{eq:4.1-2}, it is enough to show the following:
\begin{equation} \label{eq:4.2}
|\{(c_1, \dots ,c_r,d_1, \dots ,d_r) \in (\mathbb{Z}/2)^{2r} \mid c_1d_1 + \dots + c_rd_r =0 \}| = 2^{2r-1} + 2^{r-1}. 
\end{equation} 
We show this by induction.
When $r=1$, we can check \eqref{eq:4.2} easily.         
Suppose that \eqref{eq:4.2} holds when $r=k$, and we consider the case $r=k+1$.
When $c_{k+1}d_{k+1}=0$ (i.e., $(c_{k+1},d_{k+1})$ is $(0,0), (1,0)$ or $(0,1)$), the number of elements $(c_1,\dots,c_k,d_1,\dots,d_k)$ in $(\mathbb{Z}/2)^{2k}$ satisfying $c_1d_1 + \dots + c_kd_k=0$ is $2^{2k-1} + 2^{k-1}$ by assumption of induction.
When $c_{k+1}d_{k+1}=1$ (i.e., $(c_{k+1},d_{k+1})=(1,1)$), the number of elements $(c_1,\dots,c_k,d_1,\dots,d_k)$ in $(\mathbb{Z}/2)^{2k}$ satisfying $c_1d_1 + \dots + c_kd_k=1$ is $2^{2k} - (2^{2k-1} + 2^{k-1})$.
So
\[
  \begin{split}
 & |\{(c_1, \dots ,c_{k+1},d_1, \dots ,d_{k+1}) \in (\mathbb{Z}/2)^{2(k+1)} \mid c_1d_1 + \dots + c_{k+1}d_{k+1} =0 \}|\\
 =& 3(2^{2k-1} + 2^{k-1}) + 2^{2k} - (2^{2k-1} + 2^{k-1}) = 2^{2k+1} + 2^k. 
\end{split}
\]
Therefore \eqref{eq:4.2} also holds when $r=k+1$.
%\qed
\end{proof}

%\medskip
Note that the manifolds in Proposition~\ref{prop:4.1} except $\mathbb{C}P^1\times\mathbb{C}P^1$ do not decompose into the product of two manifolds of positive dimension. The following theorem generalizes Theorem~\ref{theo:3.1}. 

\begin{thm} \label{theo:4.1}
Let $M_i$ $(1\le i\le k)$ and $M_j'$ $(1\le j\le \ell)$ be $\mathbb{C}P^1$ or the manifolds in Proposition~\ref{prop:4.1} except $\mathbb{C}P^1\times \mathbb{C}P^1$.    
If $H^*(\prod_{i=1}^kM_i;\mathbb{Z})$ and $H^*(\prod_{j=1}^\ell M_j^\prime;\mathbb{Z})$ are isomorphic as graded rings, then $k=\ell$ and there exists an element $\sigma$ in the symmetric group $S_k$ on $k$ letters such that $M_i$ and $M_{\sigma(i)}^\prime$ are diffeomorphic for all $1\le i\le k$.
\end{thm}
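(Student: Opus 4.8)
The plan is to follow the strategy of Theorem~\ref{theo:3.1}: first normalize both products into standard factors, then extract enough invariants from the graded ring $H^*(\cdot;\mathbb{Z})$ to read off each multiplicity. By Proposition~\ref{prop:4.1} I would write
\[
M=(\mathbb{C}P^1)^{m}\times (S^4)^{a}\times\!\!\prod_{p\ge q\ge 0,\,p+q\ge 1}\!\!(p\mathbb{C}P^2\sharp q\overline{\mathbb{C}P^2})^{c_{p,q}}\times\prod_{r\ge 2}\bigl(r(\mathbb{C}P^1\times\mathbb{C}P^1)\bigr)^{d_r},
\]
and $M'$ similarly with primed multiplicities, the goal being $m=m'$, $a=a'$, $c_{p,q}=c'_{p,q}$, $d_r=d'_r$. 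A graded ring isomorphism $H^*(M;\mathbb{Z})\cong H^*(M';\mathbb{Z})$ induces, for every commutative ring $R$, a ring isomorphism $\phi_R$ and hence a natural bijection $A(M;R)\cong A(M';R)$, which for $R=\mathbb{R}$ is a homeomorphism.

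First I would use $R=\mathbb{R}$. By Lemmas~\ref{3.2} and \ref{4.2}, $A(M;\mathbb{R})$ is a disjoint union of pieces $S^{p-1}\times S^{q-1}\times\mathbb{R}$, and matching connected components by homeomorphism type immediately gives $c_{p,q}=c'_{p,q}$ whenever the two spheres have distinct positive dimensions (i.e.\ $p>q\ge 1$), together with the sums $c_{p,p}+d_p=c'_{p,p}+d'_p$ (type $S^{p-1}\times S^{p-1}\times\mathbb{R}$, $p\ge2$) and $2m+4c_{1,1}=2m'+4c'_{1,1}$ (the contractible components). What the real invariant cannot see is (i) the split of $c_{p,p}$ from $d_p$, since $p\mathbb{C}P^2\sharp p\overline{\mathbb{C}P^2}$ and $p(\mathbb{C}P^1\times\mathbb{C}P^1)$ have homeomorphic $A(\cdot;\mathbb{R})$; (ii) the split of $m$ from $c_{1,1}$; and (iii) the factors $S^4$ and the positive-definite $p\mathbb{C}P^2$ (the case $q=0$), for which $A(\cdot;\mathbb{R})=\emptyset$.

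To break these ambiguities I would work one factor at a time rather than with global counts. The key observation is that the cup product recovers the factorization of $H^2$ itself: for a nonzero $u\in H^2(M_i;\mathbb{Q})$ the annihilator $\{v:uv=0\}$ is contained in $H^2(M_i;\mathbb{Q})$ (because by K\"unneth $uv_j=u\otimes v_j\ne 0$ for any nonzero component $v_j$ in another factor), so $\mathbb{Q}u+\{v:uv=0\}$ reconstructs $H^2(M_i;\mathbb{Q})$ from a non-isotropic $u$; intersecting with $H^2(M;\mathbb{Z})$ recovers the integral lattice $H^2(M_i;\mathbb{Z})$ and its intersection form. This is intrinsic, so $\phi$ carries factor subspaces to factor subspaces isometrically. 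Now the integral form classifies the factor: $\langle1\rangle^p\oplus\langle-1\rangle^q$ for $p\mathbb{C}P^2\sharp q\overline{\mathbb{C}P^2}$ (determined by rank $p+q$ and signature $p-q$, and always odd when $p+q\ge 1$), the even hyperbolic form $H^{\oplus r}$ for $r(\mathbb{C}P^1\times\mathbb{C}P^1)$, and the rank-one zero form for $\mathbb{C}P^1$. In particular the parity of the form distinguishes $p\mathbb{C}P^2\sharp p\overline{\mathbb{C}P^2}$ (odd, non-spin) from $p(\mathbb{C}P^1\times\mathbb{C}P^1)$ (even, spin), settling (i) for each $p$ separately; equivalently one may localize the mod-$2$ count, since restricting $A(M;\mathbb{Z}/2)$ to the recovered subspace $H^2(M_i;\mathbb{Z})\otimes\mathbb{Z}/2$ returns $|A(M_i;\mathbb{Z}/2)|$, which by Lemma~\ref{4.3} is $2^{2p-1}-1$ in the non-spin case and $2^{2r-1}+2^{r-1}-1$ in the spin case. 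The rank of the recovered subspace is $1$ for $\mathbb{C}P^1$ and $2$ for $\mathbb{C}P^2\sharp\overline{\mathbb{C}P^2}$, which settles (ii), and the positive-definite $p\mathbb{C}P^2$ factors are recovered and counted the same way, settling the $\mathbb{C}P^2$ part of (iii). Finally the number of $S^4$ factors is the remaining genuinely four-dimensional datum: it equals $\dim_{\mathbb{Q}}\operatorname{coker}\bigl(H^2(M;\mathbb{Q})\otimes H^2(M;\mathbb{Q})\to H^4(M;\mathbb{Q})\bigr)$, because every other degree-$4$ class is a product of degree-$2$ classes while each $S^4$ contributes one primitive generator; hence $a=a'$.

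The main obstacle, and the reason the global invariants used for Theorem~\ref{theo:3.1} do not suffice verbatim, is exactly the presence of infinitely many factor types together with the coincidences (i)–(iii): a single homeomorphism type $S^{p-1}\times S^{p-1}\times\mathbb{R}$ is shared by two different manifolds, and two whole families of factors ($S^4$ and the definite $p\mathbb{C}P^2$) are invisible to $A(\cdot;\mathbb{R})$, so one total $\mathbb{Z}/2$-count and one dimension count cannot decouple the unknowns. The crux is therefore to replace global counts by component-local, integrally defined invariants—the per-factor intersection form and the cokernel of the squaring map—each of which is manifestly preserved by the integral ring isomorphism and pins down one multiplicity at a time.
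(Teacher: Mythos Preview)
Your identification of what $A(\cdot;\mathbb{R})$ yields and of the three residual ambiguities is correct, and your cokernel computation for the $S^4$ count is a clean substitute for the paper's Poincar\'e polynomial argument. The gap is in the annihilator step. You prove that \emph{if} $u$ already lies in a single factor $H^2(M_i;\mathbb{Q})$ then $\mathbb{Q}u+\operatorname{Ann}(u)=H^2(M_i;\mathbb{Q})$, and then assert ``this is intrinsic, so $\phi$ carries factor subspaces to factor subspaces.'' But the construction is intrinsic only as a map $u\mapsto \mathbb{Q}u+\operatorname{Ann}(u)$; to get a factor subspace out you must feed in a single-factor element, and nothing in your argument tells you which non-isotropic $u$ are single-factor. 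For instance, if $u=u_j+u_k$ with $u_j,u_k$ non-isotropic in distinct $4$-manifold factors, one checks $\operatorname{Ann}(u)=0$, so $\mathbb{Q}u+\operatorname{Ann}(u)=\mathbb{Q}u$ is one-dimensional and not any factor subspace. Thus the collection $\{\mathbb{Q}u+\operatorname{Ann}(u):u^2\neq 0\}$ contains the factor subspaces but also many spurious lines, and you have not characterized the former among the latter. Consequently your recovery of the individual intersection forms, and in particular your count of the positive-definite $p\mathbb{C}P^2$ factors, is not justified.

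The paper sidesteps this by never isolating a single factor. Instead it takes, for each homeomorphism type $T$ of connected component appearing in $A(M;\mathbb{R})$, the \emph{linear span} of all components of that type; this subspace is manifestly preserved by the real isomorphism (hence by $\phi$), and equals $H^2$ of the sub-product of all factors contributing components of type $T$. Restricting the $\mathbb{Z}/2$ count of Lemma~\ref{4.3} to that intrinsic subspace then separates $m$ from $c_{1,1}$ and $c_{p,p}$ from $d_p$. The remaining invisible factors $S^4$ and $p\mathbb{C}P^2$ are handled in one stroke by comparing Poincar\'e polynomials of the leftover sub-products, since $(1+x^2)^n\prod_p(1+px+x^2)^{c_{p,0}}$ is a unique factorization in $\mathbb{Z}[x]$. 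If you want to salvage your per-factor approach, you would need an intrinsic characterization of the factor subspaces (for example, as the maximal subspaces $V\subset H^2(M;\mathbb{Q})$ on which the cup form is nondegenerate with one-dimensional image in $H^4$), and then verify that no ``diagonal'' subspaces satisfy it.
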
 

%\noindent{\itshape Proof}.
\begin{proof}
 Let $m$ (resp, $m_{p,q}$, $n_r$ or $n$) be the number of $M_i$'s diffeomorphic to $\mathbb{C}P^1$ (resp, $p\mathbb{C}P^2 \sharp q\overline{\mathbb{C}P^2}$ $(p\ge q\ge 0,\ p+q\geq 1)$, $r(\mathbb{C}P^1 \times \mathbb{C}P^1)$ $(r\ge 2)$ or $S^4$).
 Similarly, let $m'$ (resp, $m'_{p,q}$, $n'_r$ or $n'$) be the number of $M'_j$'s diffeomorphic to $\mathbb{C}P^1$ (resp, $p\mathbb{C}P^2 \sharp q\overline{\mathbb{C}P^2}$ $(p\ge q\ge 0,\ p+q\geq 1)$, $r(\mathbb{C}P^1 \times \mathbb{C}P^1)$ $(r\ge 2)$ or $S^4$).
Therefore,
 \begin{equation} \label{eq:4.3}
  \begin{split}
   M&:=\prod_{i=1}^kM_i=(\mathbb{C}P^1)^m  \times \prod_{p\geq q} (p\mathbb{C}P^2 \sharp q\overline{\mathbb{C}P^2})^{m_{p,q}}\times \prod_{r \geq 2} (r(\mathbb{C}P^1 \times \mathbb{C}P^1))^{n_r} \times (S^4)^n\\
   M^\prime &:=\prod_{j=1}^\ell M_j^\prime=(\mathbb{C}P^1)^{m'}  \times \prod_{p \geq q} (p\mathbb{C}P^2 \sharp q\overline{\mathbb{C}P^2})^{m'_{p,q}}\times \prod_{r \geq 2} (r(\mathbb{C}P^1 \times \mathbb{C}P^1))^{n'_r} \times (S^4)^{n'}
  \end{split}
 \end{equation}
 
 By assumption, $H^*(M;\mathbb{Z})$ and $H^*(M^\prime;\mathbb{Z})$ are isomorphic as graded rings, and an isomorphism $\varphi$ between them induces an isomorphism between $H^*(M;R)$ and $H^*(M^\prime;R)$ for any commutative ring $R$ and induces a bijection between $A(M;R)$ and $A(M^\prime;R)$. 
 When $R=\mathbb{R}$, the bijection is a homeomorphism.   Comparing the homeomorphism type and the number of connected components of $A(M;\mathbb{R})$ and $A(M^\prime;\mathbb{R})$ using Lemmas~\ref{3.2} and \ref{4.2},  we obtain 
\begin{equation} \label{eq:4.4} 
2m+4m_{1,1}=2m'+4m'_{1,1}, \quad m_{p,q}=m'_{p,q} \ (p>q\ge 1),\quad m_{p,p}+n_p=m'_{p,p}+n'_p \ (p\ge 2). 
\end{equation}

The linear subspace spanned by all one dimensional connected components in $A(M;\mathbb{R})$ (resp, $A(M^\prime;\mathbb{R})$) is  $H^2((\mathbb{C}P^1)^m\times (\mathbb{C}P^2 \sharp \overline{\mathbb{C}P^2})^{m_{1,1}};\mathbb{R})$ (resp, $H^2((\mathbb{C}P^1)^{m'}\times (\mathbb{C}P^2 \sharp \overline{\mathbb{C}P^2})^{m'_{1,1}};\mathbb{R})$).
Therefore, the isomorphism $\varphi$ induces an isomorphism between $H^2((\mathbb{C}P^1)^m\times (\mathbb{C}P^2 \sharp \overline{\mathbb{C}P^2})^{m_{1,1}};\mathbb{Z})$ and $H^2((\mathbb{C}P^1)^{m'}\times(\mathbb{C}P^2 \sharp \overline{\mathbb{C}P^2})^{m'_{1,1}};\mathbb{Z})$.
In particular, $\varphi$ induces an isomorphism between the cohomology rings with $\mathbb{Z}/2$ coefficients.
It follows from Lemma~\ref{3.2} that 
\[ m|A(\mathbb{C}P^1;\mathbb{Z}/2)|+m_{1,1}|A(\mathbb{C}P^2 \sharp \overline{\mathbb{C}P^2};\mathbb{Z}/2)|=
m'|A(\mathbb{C}P^1;\mathbb{Z}/2)|+m'_{1,1}|A(\mathbb{C}P^2 \sharp \overline{\mathbb{C}P^2};\mathbb{Z}/2)| \]
and hence we have $m+m_{1,1}=m'+m_{1,1}'$ by Lemma~\ref{3.3}.  This together with the first identity in \eqref{eq:4.4} implies that 
\begin{equation} \label{eq:4.5}
m=m',\quad m_{1,1}=m_{1,1}'.
\end{equation}

The linear subspace spanned by all connected components homeomorphic to $S^{p-1}\times S^{p-1}\times \mathbb{R}$ $(p\ge 2)$ in $A(M;\mathbb{R})$ (resp, $A(M^\prime;\mathbb{R})$) is $H^2((p\mathbb{C}P^2 \sharp p\overline{\mathbb{C}P^2})^{m_{p,p}}\times (p(\mathbb{C}P^1 \times \mathbb{C}P^1))^{n_p};\mathbb{R})$ (resp, $H^2((p\mathbb{C}P^2 \sharp p\overline{\mathbb{C}P^2})^{m'_{p,p}}\times (p(\mathbb{C}P^1 \times \mathbb{C}P^1))^{n'_p};\mathbb{R})$).
Therefore, it follows from Lemma~\ref{3.2} that 
\[
\begin{split}
&m_{p,p}|A(p\mathbb{C}P^2 \sharp p\overline{\mathbb{C}P^2};\mathbb{Z}/2)|+n_p|A(p(\mathbb{C}P^1 \times \mathbb{C}P^1);\mathbb{Z}/2)|\\
 =&m'_{p,p}|A(p\mathbb{C}P^2 \sharp p\overline{\mathbb{C}P^2};\mathbb{Z}/2)|+n'_p|A(p(\mathbb{C}P^1 \times \mathbb{C}P^1);\mathbb{Z}/2)|
 \end{split}
 \]
 and hence we have 
\begin{equation} \label{eq:4.6}
m_{p,p}(2^{2p-1}-1)+n_p(2^{2p-1}+2^{p-1}-1)=m'_{p,p}(2^{2p-1}-1)+n'_p(2^{2p-1}+2^{p-1}-1)
\end{equation}
by Lemma~\ref{4.3}.   
So by \eqref{eq:4.4}, \eqref{eq:4.5}, and \eqref{eq:4.6}, we have 
\begin{equation} \label{eq:4.7}
m=m',\quad m_{p,q}=m'_{p,q}\ (p\ge q\ge 1),\quad n_p=n'_p\ (p\ge 2).
\end{equation}

It remains to prove $n=n'$ and $m_{p,0}=m'_{p,0}$ $(p\ge 1)$.
Since $H^*(M;\mathbb{Z})$ and $H^*(M';\mathbb{Z})$ are isomorphic by assumption, the Poincar\'{e} polynomials of $M$ and $M'$ must coincide. 
So, the Poincar\'{e} polynomials of $(S^4)^n \times \prod_{p\ge 1} (p\mathbb{C}P^2)^{m_{p,0}}$ and $(S^4)^{n'} \times \prod_{p\ge 1} (p\mathbb{C}P^2)^{m'_{p,0}}$ must coincide by \eqref{eq:4.3} and \eqref{eq:4.7}.
It follows that 
\[
(1+x^2)^n \times \prod_{p\ge 1}(1+px+x^2)^{m_{p,0}}=(1+x^2)^{n'} \times \prod_{p\ge 1}(1+px+x^2)^{m'_{p,0}}
\]
where $x$ is a variable.  This implies that $n=n'$ and $m_{p,0}=m'_{p,0}$.
%\qed
\end{proof}

%\medskip
Similarly to Corollary 3.4, the following corollary follows from Theorem 4.4.

\begin{cor}[cancellation] 
Let $M$, $M^\prime$ and $M^{\prime \prime}$ be products of copies of $\mathbb{C}P^1$ and manifolds in Proposition~\ref{prop:4.1}. 
If $M \times M^{\prime \prime}$ and $M^\prime \times M^{\prime \prime}$ are diffeomorphic, then so are $M$ and $M^\prime$. 
\end{cor}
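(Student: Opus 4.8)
The plan is to deduce this exactly as the cancellation corollary following Theorem~\ref{theo:3.1} was deduced from that theorem, now invoking the uniqueness statement in Theorem~\ref{theo:4.1}. The one point requiring care is that the list of manifolds in Proposition~\ref{prop:4.1} still contains a decomposable member, namely $\mathbb{C}P^1\times\mathbb{C}P^1$ (the case $r=1$), whereas Theorem~\ref{theo:4.1} is phrased in terms of the building blocks $\mathbb{C}P^1$, $S^4$, $p\mathbb{C}P^2\sharp q\overline{\mathbb{C}P^2}$, and $r(\mathbb{C}P^1\times\mathbb{C}P^1)$ with $r\ge 2$, with $\mathbb{C}P^1\times\mathbb{C}P^1$ itself excluded. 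So first I would normalize each of $M$, $M'$, $M''$ by replacing every factor diffeomorphic to $\mathbb{C}P^1\times\mathbb{C}P^1$ with $(\mathbb{C}P^1)^2$. This alters none of the diffeomorphism types involved, and afterwards $M$, $M'$, and $M''$ are each presented as a product of the blocks occurring in Theorem~\ref{theo:4.1}.

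Next, from the hypothesis that $M\times M''$ and $M'\times M''$ are diffeomorphic I would pass to an isomorphism $H^*(M\times M'';\mathbb{Z})\cong H^*(M'\times M'';\mathbb{Z})$ of graded rings. Theorem~\ref{theo:4.1} then applies to these two products and yields that their factorizations into the building blocks agree as multisets: there is a bijection between the block factors of $M\times M''$ and those of $M'\times M''$ matching diffeomorphic factors.

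The final step is a formal cancellation of multisets. Writing $\mathcal{F}(\,\cdot\,)$ for the multiset of building-block factors of a product, one has $\mathcal{F}(M\times M'')=\mathcal{F}(M)\uplus\mathcal{F}(M'')$, and likewise $\mathcal{F}(M'\times M'')=\mathcal{F}(M')\uplus\mathcal{F}(M'')$, simply because a product presentation of $M\times M''$ is the juxtaposition of presentations of $M$ and of $M''$. The previous step gives $\mathcal{F}(M)\uplus\mathcal{F}(M'')=\mathcal{F}(M')\uplus\mathcal{F}(M'')$, and cancelling the common summand $\mathcal{F}(M'')$ leaves $\mathcal{F}(M)=\mathcal{F}(M')$. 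Hence $M$ and $M'$ have identical multisets of blocks and are therefore diffeomorphic.

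I do not expect a serious obstacle here: all the mathematical content already resides in Theorem~\ref{theo:4.1}, and what remains is the bookkeeping above. The only place demanding attention is the normalization step, where one must be sure that after rewriting each $\mathbb{C}P^1\times\mathbb{C}P^1$ as $(\mathbb{C}P^1)^2$ every factor is genuinely one of the blocks covered by Theorem~\ref{theo:4.1}; this is guaranteed by the remark, just before that theorem, that no other manifold in Proposition~\ref{prop:4.1} decomposes into a product of positive-dimensional manifolds.
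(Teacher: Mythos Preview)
Your proposal is correct and is exactly the approach the paper intends: the paper gives no explicit argument beyond ``follows from Theorem~\ref{theo:4.1}'' (analogously to Corollary~3.4), and your write-up simply spells out the routine normalization and multiset cancellation that this entails. The care you take in replacing any $\mathbb{C}P^1\times\mathbb{C}P^1$ factor by $(\mathbb{C}P^1)^2$ before invoking Theorem~\ref{theo:4.1} is precisely the small detail the paper leaves implicit.
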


%\begin{rem}
A \emph{topological toric manifold} introduced by Ishida-Fukukawa-Masuda (\cite{is-fu-ma13}) is a compact smooth manifold of real dimension $2n$ with a smooth action of complex torus $(\mathbb{C}^*)^n$ that is locally equivariantly diffeomorphic to a smooth faithful representation space of $(\mathbb{C}^*)^n$.
%Topological toric manifolds are simply connected.
A toric manifold regarded as a smooth manifold is a topological toric manifold.
A topological toric manifold of real dimension two is diffeomorphic to $\mathbb{C}P^1$ and the manifolds in Proposition~\ref{prop:4.1} except $S^4$ are topological toric manifolds.  
Therefore, it follows from Theorem 4.4 that Theorem~\ref{theo:3.1} holds for topological toric manifolds, so we may ask the cohomological rigidity problem and the unique decomposition problem for topological toric manifolds and no counterexample is known even to these extended problems.  
%\end{rem}

\medskip
\noindent{\bf Acknowledgement}.
I would like to thank Mikiya Masuda for many interesting and fruitful discussions on this subject. 
I am also thankful to Hiroaki Ishida for his comments on Section 4.

\bigskip


\begin{thebibliography}{9}
 \bibitem{char65}
  L. S. Charlap,
  \textit{Compact flat riemannian manifolds I},
  Ann. of Math. (2), 81, No.1 (1965), 15-30.
 \bibitem{ch-ma-ou10}
  S.~Choi, M.~Masuda and S.~Oum,
  \textit{Classification of real Bott manifolds and acyclic digraphs},
  arXiv:1006.4658.
  \bibitem{ch-ma-su11}
  S. Choi, M. Masuda and D.Y. Suh, 
  \textit{Rigidity problems in toric topology, a survey}, Proc. Steklov Inst. Math. 275 (2011), 177-190. 
% \bibitem{da-ja91}
% M. W. Davis and T. Januszkiewicz,
%  \textit{Convex polytopes, Coxeter orbifolds and torus actions},
%  Duke Math. J. 62 (1991), 417-451.
 \bibitem{fult93}
  W. Fulton, 
  \textit{Introduction to Toric Varieties},
  Ann. of Math. Studies, vol.131, Princeton Univ. Press, Princeton, N.J., 1993.
 \bibitem{is-fu-ma13}
  H. Ishida, Y. Fukukawa, and M. Masuda,
  \textit{Topological toric manifolds}, Moscow Math. J. 13 (2013), no. 1, 57--98; arXiv:1012.1786.
   %\bibitem{kw-sc}  
 %S. Kwasik and R. Schultz, \textit{Multiplicative stabilization and transformation groups}, 
%Current Trends in Transformation Groups, eds. A. Bak, M. Morimoto, and F, Ushitaki, Kluwer Academic Publishers, 2002. 
 \bibitem{masu12}
  M. Masuda,
  \textit{Toric topology}, Sugaku, vol. 62 (2010), 386-411 (in Japanese), English translation will appear in Sugaku Expositions; arxiv:1203.4399.
  \bibitem{ma-su08}
  M. Masuda and D.Y. Suh, \textit{Classification problems of toric manifolds via topology}, 
    in: Toric topology, Vol. 460 of Contemp. Math., Amer. Math. Soc., Providence, RI, 2008, pp. 273--286.
\bibitem{oda88}
T. Oda, \textit{Convex Bodies and Algebraic Geometry.
An Introduction to the Theory of Toric Varieties},
Ergeb. Math. Grenzgeb. (3), 15, Springer-Verlag, Berlin, 1988.
 \bibitem{or-ra70}
  P.~Orlik and F.~Raymond,
  \textit{Actions of the torus on $4$-manifolds, I},
  Trans. Amer. Math. Soc. {\bf 152} (1970), 531--559.
 %\bibitem{ri}
  %A. Rigas,
  %\textit{$S^3$-bundles and exotic actions},
  %Bull. Soc. Math. France 112 (1984), no. 1, 69-92.      
\end{thebibliography}
\end{document}